\newcommand{\eps}{\varepsilon}
\newcommand{\de}{\delta}
\theoremstyle{plain}
\newtheorem{thm}{Theorem}[section]
\newtheorem{theorem}[thm]{Theorem}
\newtheorem{conjecture}[thm]{Conjecture}
\newtheorem{lemma}[thm]{Lemma}
\newtheorem{corollary}[thm]{Corollary}
\newtheorem{proposition}[thm]{Proposition}
\theoremstyle{definition}
\newtheorem{question}[thm]{Question}
\newtheorem{problem}[thm]{Problem}
\newtheorem{remark}[thm]{Remark}
\newtheorem{definition}[thm]{Definition}
\newtheorem{claim}[thm]{Claim}
\newtheorem{fact}[thm]{Fact}
\newtheorem{example}[thm]{Example}
\newtheorem{defn-thm}[thm]{Definition-Theorem}
\newcommand{\btheorem}{\begin{theorem}}
\newcommand{\etheorem}{\end{theorem}}
\newcommand{\bconjecture}{\begin{conjecture}}
\newcommand{\econjecture}{\end{conjecture}}
\newcommand{\bproposition}{\begin{proposition}}
\newcommand{\eproposition}{\end{proposition}}
\newcommand{\bdefinition}{\begin{definition}}
\newcommand{\edefinition}{\end{definition}}
\newcommand{\bcorollary}{\begin{corollary}}
\newcommand{\ecorollary}{\end{corollary}}
\newcommand{\bproof}{\begin{proof}}
\newcommand{\eproof}{\end{proof}}
\newcommand{\bclaim}{\begin{claim}}
\newcommand{\eclaim}{\end{claim}}
\newcommand{\bquestion}{\begin{question}}
\newcommand{\equestion}{\end{question}}
\newcommand{\bfact}{\begin{fact}}
\newcommand{\efact}{\end{fact}}
\newcommand{\bremark}{\begin{remark}}
\newcommand{\eremark}{\end{remark}}
\newcommand{\eexample}{\end{example}}
\newcommand{\bexample}{\begin{example}}
\newcommand{\elemma}{\end{lemma}}
\newcommand{\blemma}{\begin{lemma}}
\begin{document}

\title{Clique-factors in graphs with sublinear $\ell$-independence number}

\author{
Jie Han\thanks{School of Mathematics and Statistics and Center for Applied Mathematics, Beijing Institute of Technology, Beijing, China. Email: {\tt han.jie@bit.edu.cn.}}
 \and
Ping Hu\thanks{School of Mathematics, Sun Yat-sen University, Guangzhou, China. Email: {\tt huping9@mail.sysu.edu.cn.} Supported in part by National Key Research and Development Program of China (2021YFA1002100) and National Natural Science Foundation of China (11931002).}
 \and
Guanghui Wang\thanks{School of Mathematics, Shandong University, Jinan, China, Email: {\tt ghwang@sdu.edu.cn}. Research supported by Natural Science
Foundation of China (11871311,12231018) and Young Taishan Scholars probgram of Shandong Province( 201909001).}
 \and
Donglei Yang\thanks{Data Science Institute, Shandong University, Shandong, China. Email: {\tt dlyang@sdu.edu.cn}. Supported by the China Postdoctoral Science Foundation (2021T140413), Natural Science Foundation of China (12101365) and Natural Science Foundation of Shandong Province (ZR2021QA029).}
}

\maketitle
\begin{abstract}
Given a graph $G$ and an integer $\ell\ge 2$, we denote by $\alpha_{\ell}(G)$ the maximum size of a $K_{\ell}$-free subset of vertices in $V(G)$. A recent question of Nenadov and Pehova asks for determining the best possible minimum degree conditions forcing clique-factors in $n$-vertex graphs $G$ with $\alpha_{\ell}(G) = o(n)$, which can be seen as a Ramsey--Tur\'an variant of the celebrated Hajnal--Szemer\'edi theorem. In this paper we find the asymptotical sharp minimum degree threshold for $K_r$-factors in $n$-vertex graphs $G$ with $\alpha_\ell(G)=n^{1-o(1)}$ for all $r\ge \ell\ge 2$.

\end{abstract}

\section{Introduction}

Let $H$ be an $h$-vertex graph and $G$ be an $n$-vertex graph. An \emph{$H$-tiling} is a collection of vertex-disjoint copies of $H$ in $G$. An \emph{$H$-factor} is an $H$-tiling which covers all vertices of $G$. The celebrated Hajnal--Szemer\'{e}di theorem \cite{hajnal1970proof} states that for all integers $n,r$ with $r\ge2$ and $r|n$, any $n$-vertex graph $G$ with $\de(G)\ge (1-\frac{1}{r})n$ contains a $K_r$-factor.
Since then there have been many developments in several directions. From the insight of equitable coloring, Kierstead and Kostochka proved the Hajnal--Szemer\'{e}di theorem with an Ore-type
degree condition \cite{kier2008}. For a general graph $H$, Alon and Yuster \cite{alon1996h} first gave an asymptotic result by showing that if $\delta(G)\ge \left(1-\frac{1}{\chi(H)}\right)n+o(n)$, then $G$ contains an $H$-factor, where $\chi(H)$ is the chromatic number of $H$. Later, K\"{u}hn and Osthus \cite{D2009The} managed to characterise, up to an additive constant, the minimum
degree condition that forces an $H$-factor.
There are also several significant generalisations in the setting of partite graphs \cite{keevash2015multipartite}, directed graphs \cite{Treglown2015} and hypergraphs \cite{rodl2009perfect}.

\subsection{Motivation}
Erd\H{o}s and S\'{o}s~\cite{1970More} initiated the study of a variant of Tur\'{a}n problem which excludes all graphs with large independence number. More generally, for an integer $\ell\ge2$ and a graph $G$, the $\ell$-\emph{independence number} of $G$, denoted by $\alpha_{\ell}(G)$, is the maximum size of a $K_{\ell}$-free subset of vertices. Given integers $n,r$ and a function $f(n)$, we use $\textbf{RT}_{\ell}(n, K_r, f(n))$ to denote the maximum number of edges of an $n$-vertex $K_r$-free graph $G$ with $\alpha_{\ell}(G) \le f(n)$. In particular, the \emph{Ramsey--Tur\'an density} of $K_r$ is defined as $\varrho_{\ell}(r):=\lim\limits_{\alpha\to0}\lim\limits_{n\to\infty}\frac{\textbf{RT}_{\ell}(n, K_r, \alpha n)}{\binom{n}{2}}$. Szemer\'edi~\cite{E1972On} first showed that $\varrho_{2}(4)\le\frac{1}{4}$. This turned out to be sharp as Bollob\'as and Erd\H{o}s~\cite{1976On} provided a matching lower bound using an ingenious geometric construction. There are some recent exciting developments in this area~\cite{BaloghRT2012,BaloghRT2013,fox15,kkl19,Liu2021,Lud19}. For further information on Ramsey--Tur\'an theory the reader is referred to a comprehensive survey~\cite{2001Ramsey} by Simonovits and S\'os.

Note that the extremal example that achieves the optimality of the bound on $\de(G)$ in the Hajnal--Szemer\'{e}di theorem also has large independence number \cite{Balogh2016Triangle}, which makes it far from being typical. Following the spirit of the Ramsey--Tur\'{a}n theory, a natural question on the Hajnal--Szemer\'{e}di theorem is whether the minimum degree condition can be weakened when the host graph has sublinear independence number. The following Ramsey--Tur\'{a}n type problem was proposed by Balogh, Molla and Sharifzadeh \cite{Balogh2016Triangle}.

\begin{problem}\cite{Balogh2016Triangle}\label{p1}
  Let $r\ge3$ be an integer and $G$ be an $n$-vertex graph with $\alpha(G)=o(n)$. What is the minimum degree condition on $G$ that guarantees a $K_r$-factor?
\end{problem}

Balogh, Molla and Sharifzadeh~\cite{Balogh2016Triangle} studied $K_3$-factors and showed that if
the independence number of an $n$-vertex graph $G$ is $o(n)$ and $\de(G)\ge\frac{n}{2}+\eps n$ for any $\eps>0$, then
$G$ contains a triangle factor. Recently Knierim and Su \cite{knierim2019kr} resolved the case $r\ge 4$ by determining the asymptotically tight minimum degree bound $(1- \frac{2}{r})n + o(n)$.\medskip

The following problem was proposed by Nenadov and Pehova~\cite{Nenadov2018}.

\begin{problem}\label{prob1.3}
For all $r,\ell \in \mathbb{N}$ with $r \ge \ell \ge 2$, let $G$ be an $n$-vertex graph with $n\in r\mathbb{N}$ and $\alpha_{\ell}(G) = o(n)$. What is the best possible minimum degree condition on $G$ that guarantees a $K_{r}$-factor?
\end{problem}

Nenadov and Pehova~\cite{Nenadov2018} also provided upper and lower bounds on the minimum degree condition.
In particular, they solved Problem \ref{prob1.3} for $r=\ell+1$ and proved that $n/2+o(n)$ is the correct minimum degree threshold.
Knierim and Su~\cite{knierim2019kr} reiterated Problem~\ref{prob1.3} in their paper and proposed a minimum degree condition as follows.



\begin{problem}\label{prob1.5}\cite{knierim2019kr,Nenadov2018}
Is it true that for every $r,\ell\in\mathbb{N}$ with $r\ge \ell\ge 2$ and $\mu>0$, there exists $\alpha>0$ such that for sufficiently large $n\in r\mathbb{N}$, every $n$-vertex graph $G$ with \[\delta(G) \ge \max \left\{\frac{r - \ell}{r}+\mu, \frac{1}{2}+\mu\right\}n~\text{and}~ \alpha_{\ell}(G) \le \alpha n \] contains a $K_{r}$-factor?
\end{problem}

Very recently, Chang, Han, Kim, Wang and Yang~\cite{chang2021} determines the asymptotically optimal minimum degree condition for $\ell \ge \frac{3}{4}r$, which solves Problem~\ref{prob1.3} for this range, and indeed provides a negative answer to Problem~\ref{prob1.5}.
\btheorem\label{main thm}\emph{\cite{chang2021}}
Let $r,\ell \in \mathbb{N}$ such that $r > \ell \ge \frac{3}{4}r$. For any $\mu >0$, there exists $\alpha>0$ such that for sufficiently large  $n\in r\mathbb{N}$, every $n$-vertex graph $G$ with \[\delta(G) \ge \left( \frac{1}{2-\varrho_{\ell}(r-1)} + \mu \right)n~\text{and}~ \alpha_{\ell}(G) \le \alpha n\]  contains a $K_{r}$-factor.
Moreover, the minimum degree condition is asymptotically best possible.
\etheorem
Based on this result, Problem~\ref{prob1.5} should be revised as follows.

\begin{problem}\label{prob2}
Is it true that $\delta(G) \ge \max \{\frac{r - \ell}{r}+\mu, \frac{1}{2-\varrho_{\ell}(r-1)}+\mu \}n$ suffices in Problem \ref{prob1.5}?
\end{problem}

\subsection{Main results and discussions}

By the aformentioned results, Problem~\ref{prob2} is solved for $\ell=2$ \cite{knierim2019kr} and for $\ell \ge 3r/4$ \cite{chang2021}, both done by quite involved proofs. It seems to us that a complete resolution of Problem~\ref{prob2} is a quite challenging task.

The purpose of this paper is to extend the discussion on the problem to sublinear $\ell$-independence numbers. We first state a simplified version of our main result which says that the answer to Problem~\ref{prob2} is yes if we assume a slightly stronger assumption $\alpha_{\ell}(G) \le n^{1-o(1)}$.
\btheorem\label{th2-}
For $\mu, c\in(0,1)$, $r,\ell \in \mathbb{N}$ such that $r > \ell \ge 2$, the following holds for sufficiently large $n\in r\mathbb{N}$.
Every $n$-vertex graph $G$ with $\delta(G) \ge \max\{\frac{r-\ell}{r}+\mu, \frac{1}{2-\varrho_{\ell}(r-1)} + \mu \}n $ and $\alpha_{\ell}(G) \le n^c$  contains a $K_{r}$-factor.
\etheorem
To state our main result, we define more general versions of the Ramsey--Tur\'an densities as follows.

\bdefinition
Let $f(n)$ be a monotone increasing function, and $r,\ell \in \mathbb{N},\alpha\in(0,1)$.
\begin{enumerate}
	\item[($i$)] Let $\textbf{RT}_{\ell}(n,K_{r},f(\alpha n))$ be the maximum integer $m$ for which there exists an $n$-vertex $K_{r}$-free graph $G$ with $e(G)=m$ and $\alpha_{\ell}(G)\le f(\alpha n)$. Then let
\[
\varrho_\ell(r,f):=\lim\limits_{\alpha\to 0}\limsup\limits_{n\to \infty}\frac{\textbf{RT}_{\ell}(n,K_{r},f(\alpha n))}{\binom n2}.
\]
	\item[($ii$)] Let $\textbf{RT}^*_{\ell}(n,K_{r},f(\alpha n))$ be the maximum integer $\delta$ for which there exists an $n$-vertex $K_{r}$-free graph $G$ with $\delta(G)=\delta$ and $\alpha_{\ell}(G)\le f(\alpha n)$. Then let
\[
\varrho^*_\ell(r,f):=\lim\limits_{\alpha\to 0}\limsup\limits_{n\to \infty}\frac{\textbf{RT}^*_{\ell}(n,K_{r},f(\alpha n))}{n}.
\]
\end{enumerate}
\edefinition
By definition trivially it holds that $\varrho^*_\ell(r,f) \le \varrho_\ell(r,f)$.
It is proved in~\cite{chang2021} that $\varrho^*_\ell(r,f) = \varrho_\ell(r,f)$ if there exists $c\in (0,1)$ such that $xf(n)\le f(x^{1/c} n)$ for every $x\in (0,1)$ and $n\in \mathbb N$.

The full version of our result is stated as follows.

\btheorem\label{th2}
For $\mu >0$, $r,\ell \in \mathbb{N}$ such that $r > \ell \ge 2$, there exists $\alpha>0$ such that the following holds for sufficiently large $n\in r\mathbb{N}$. Let $\lambda=1/\lfloor\frac{r}{\ell}+1\rfloor$ and $f(n)\le n^{1-\omega(n)\log^{-\lambda}n}$ be a monotone increasing function, where $\omega(n)\rightarrow\infty$ slowly.\footnote{The strange-looking function $n^{1-\omega(n)\log^{-\lambda}n}$ satisfies $n^{1-\varepsilon}<n^{1-\omega(n)\log^{-\lambda}n}<\frac{n}{\log n}$ for any constant $\varepsilon>0$.}
Then every $n$-vertex graph $G$ with \[\delta(G) \ge \max\left\{\frac{r-\ell}{r}+\mu, \frac{1}{2-\varrho^*_{\ell}(r-1,f)} + \mu \right\}n ~\text{and}~\alpha_{\ell}(G) < f(\alpha n)\]  contains a $K_{r}$-factor.
\
\etheorem

In fact, Theorem~\ref{th2} implies Theorem~\ref{th2-} by the observation that $\varrho^*_{\ell}(r-1,f)\le \varrho^*_{\ell}(r-1)=\varrho_{\ell}(r-1)$ holds for any function $f(n)=n^c$ with $c\in(0,1)$.
We shall supply lower bound constructions (see next subsection) which show that the minimum degree condition in Theorem~\ref{th2} is asymptotically best possible for $\alpha_{\ell}(G)\in (n^{1-\gamma}, n^{1-\eps})$, for any $\gamma\in(0,\frac{\ell-1}{\ell^2+2\ell})$ and any $\eps>0$.
This can be seen as a stepping stone towards a full understanding of the Ramsey--Tur\'an tiling thresholds for cliques where $\alpha_\ell(G)\in [2, o(n)]$.

Here we also provide some concrete thresholds that we could spell out from Theorem~\ref{th2}.
Recall that Nenadov and Pehova~\cite{Nenadov2018} solved Problem \ref{prob1.3} for $r=\ell+1$ whilst Knierim and Su \cite{knierim2019kr} solved the case $\ell=2, r\ge 4$. Now we consider the first open case $r=\ell+2,\ell\ge 3$. Note that $\varrho^*_{\ell}(\ell+1,f)=\varrho_{\ell}(\ell+1)=0$.
Then Theorem~\ref{main thm} says that for $\ell\ge 6$ (that is, $\ell\ge \frac{3}{4}(\ell+2)$) and $\alpha_{\ell}(G) = o(n)$, $\frac{n}{2}+o(n)$ is the minimum degree threshold forcing a $K_{\ell+2}$-factor. For the remaining cases $\ell\in\{3,4,5\}$, Theorem~\ref{th2} implies that the minimum degree threshold is $\frac{n}{2}+o(n)$ under a stronger condition $\alpha_{\ell}(G) \le n^{1-\eps}$ for any fixed $\eps>0$.

Our proof of Theorem~\ref{th2} uses the absorption method and the regularity method. In particular, we use dependent random choice for embedding cliques in regular tuples with sublinear independence number, which is closely related to the Ramsey--Tur\'an problem.

\subsection{Sharpness of the minimum degree condition}
We note that both terms in the minimum degree condition in Theorem~\ref{th2} are asymptotically best possible.
First, we show that the first term cannot be weakened when $\alpha_{\ell}(G)\in (n^{1-\gamma},o(n))$ for some constant $\gamma$ as follows.
\begin{proposition}\label{lower}
Given integers $r,\ell \in \mathbb{N}$ with $r > \ell \ge 2$ and constants $\eta,\gamma$ with $\eta\in(0,\frac{r-\ell}{r})$, $\gamma\in(0,\frac{\ell-1}{\ell^2+2\ell})$, the following holds for all sufficiently large $n\in\mathbb{N}$ and constant $\mu:=\tfrac{r}{r-\ell}(\tfrac{r-\ell}{r}-\eta)$. There exists an $n$-vertex graph $G$ with $\delta(G)\ge \eta n$ and $\alpha_{\ell}(G)< n^{1-\gamma}$ such that every $K_r$-tiling in $G$ covers at most $(1-\mu)n$ vertices.
\end{proposition}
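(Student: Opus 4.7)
The plan is to exhibit an explicit $n$-vertex graph $G$ satisfying the required conditions. I partition $V(G)=A\cup B$ with $|B|=\lceil\eta n\rceil+O(1)$ and $|A|=n-|B|$, take $G[B]$ to be a complete graph, and design $G[A]$ together with the bipartite graph between $A$ and $B$ so that the following three properties hold: (i) $\delta(G)\ge\eta n$; (ii) $\alpha_\ell(G)<n^{1-\gamma}$; and (iii) every copy of $K_r$ in $G$ uses at most $\ell-1$ vertices from $A$.

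Given (iii), each $K_r$ uses at least $r-\ell+1$ vertices of $B$, so a $K_r$-tiling has at most $|B|/(r-\ell+1)$ members covering at most $r|B|/(r-\ell+1)$ vertices, leaving at least $n(1-r\eta/(r-\ell+1))-O(1)\ge\mu n$ vertices uncovered, where the final inequality is a short computation using $r-\ell+1\ge r-\ell$, $\mu=1-r\eta/(r-\ell)$, and $n$ sufficiently large. Thus the tiling bound is an immediate consequence of (iii).

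The main difficulty is constructing $G[A]$ so that (ii) and (iii) hold simultaneously. The naive choice of making $G[A]$ be $K_\ell$-free satisfies (iii) trivially but forces $\alpha_\ell(G)\ge|A|=\Theta(n)$, contradicting (ii). So $G[A]$ must contain many copies of $K_\ell$ (to keep $\alpha_\ell$ small) while none of them extends to a $K_r$ in $G$. My plan is to take $G[A]$ as a blow-up of a Ramsey--Tur\'an-like template: partition $A$ into $t=\Theta(n^\gamma)$ blocks $A_1,\dots,A_t$; inside each block place an edgeless graph; connect blocks by a carefully chosen auxiliary graph $H$ on the block indices that is $K_r$-free but $K_\ell$-rich, so that $G[A]$ contains many $K_\ell$'s (one vertex per block in a $K_\ell$ of $H$) but no $K_r$; and attach each block $A_i$ only to a dedicated $B_i\subseteq B$ with the $B_i$'s pairwise disjoint, so that the common $B$-neighbourhood of any cross-block $K_\ell$ in $G[A]$ is empty and no $K_\ell$ can be confined within a single block.

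The central obstacle is the joint calibration of block sizes, the graph $H$, and the bipartite colouring to attain $\alpha_\ell(G)<n^{1-\gamma}$ while maintaining $\delta(G)\ge\eta n$. The specific bound $\gamma<(\ell-1)/(\ell(\ell+2))$ emerges from this optimisation and reflects the best Ramsey--Tur\'an exponents attainable for $K_\ell$-free graphs with sublinear independence number that remain compatible with the bipartite colouring constraint. Once the construction is in place, (i), (ii), and (iii) reduce to routine vertex-degree counts and clique analyses.
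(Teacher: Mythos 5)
The high-level idea of combining a clique of size $\approx\eta n$ with a part having small $\alpha_\ell$ is right, but your construction fails on the crucial bound $\alpha_\ell(G)<n^{1-\gamma}$, and the difficulty traces back to an unnecessary requirement you set for yourself.

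Your property (iii) demands that every $K_r$ uses at most $\ell-1$ vertices from $A$, and the discussion that follows treats this as forced: "$G[A]$ must contain many copies of $K_\ell$ \ldots while none of them extends to a $K_r$ in $G$." This is a false lead. The proposition only needs every $K_r$ to use at least $r-\ell$ vertices of the clique side, i.e.\ at most $\ell$ vertices from $A$, and this follows simply by making $G[A]$ $K_{\ell+1}$-free. Then, with the bipartite graph $G[A,B]$ taken \emph{complete} (so that every vertex trivially has degree $\ge|B|=\eta n$), and with $\alpha_\ell(G[A])<n^{1-\gamma}$, one gets $\alpha_\ell(G)\le\alpha_\ell(G[A])+\ell-1<n^{1-\gamma}$ because $B$ is a clique, and any $K_r$-tiling covers at most $\tfrac{r}{r-\ell}|B|=(1-\mu)n$ vertices. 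The entire content of the proposition then reduces to the existence of an $m$-vertex $K_{\ell+1}$-free graph with $\alpha_\ell<m^{1-\gamma}$ for $\gamma<\tfrac{\ell-1}{\ell^2+2\ell}$; the paper proves exactly this (Lemma~\ref{G2}) via a random graph $G(m,p)$ with $p=m^{-(2-\gamma)/(\ell+1)}$, using FKG for $K_{\ell+1}$-freeness and Janson's inequality for the $\alpha_\ell$ bound. The $K_\ell$'s in $A$ are \emph{allowed} to extend to $K_r$'s; the tiling bound comes purely from counting vertices in $B$.

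By insisting on (iii) you are pushed to the blow-up design, and that design cannot meet the independence requirement. With $t=\Theta(n^\gamma)$ independent blocks of size $\Theta(n^{1-\gamma})$ glued along an auxiliary $K_r$-free graph $H$ on $[t]$, any $K_\ell$-free subset $J\subseteq[t]$ of $H$ lifts to a $K_\ell$-free subset $\bigcup_{j\in J}A_j$ of $G[A]$, so $\alpha_\ell(G[A])\ge(\text{block size})\cdot\alpha_\ell(H)$. Since $\alpha_\ell(H)\ge\alpha(H)$ and $H$ is a $K_r$-free graph on $t\to\infty$ vertices, Ramsey's theorem forces $\alpha(H)\to\infty$, giving $\alpha_\ell(G[A])=\omega(n^{1-\gamma})$, which violates your requirement. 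Fixing this would require far smaller blocks (hence $t\gg n^\gamma$) together with an $H$ that itself solves a Ramsey--Tur\'an problem of the same type, so the construction does not actually become simpler than producing the $K_{\ell+1}$-free graph with small $\alpha_\ell$ directly. Moreover, once the $B_i$ are pairwise disjoint, every vertex of $A$ has $B$-degree $o(n)$, so the minimum-degree condition must be salvaged entirely from within $A$, adding a further constraint $\delta(H)\gtrsim\eta t$ that is not needed in the complete-bipartite construction.
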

Indeed, Proposition~\ref{lower} also gives, in the setting that $\alpha_{\ell}(G)\le n^{1-o(1)}$, a lower bound construction for the minimum degree condition forcing an almost $K_r$-tiling that leaves a constant number of vertices uncovered. More results on almost graph tilings can be found in a recent comprehensive paper~\cite{han2021ramseyturan}.

The second term $\frac{1}{2-\varrho^*_{\ell}(r-1,f)}$ is also asymptotically tight, which is given by a cover threshold construction as follows.\\ [5pt]
\textbf{Cover Threshold.}
To have a $K_r$-factor in $G$, a naive necessary condition is that every vertex $v\in V(G)$ is covered by a copy of $K_r$ in $G$. The cover threshold has been first discussed in~\cite{Han2017} and appeared in a few different contexts~\cite{chang2021,chang2020factors,Sun2021quasirandom}.

Now we give a construction that shows the optimality of the term $\frac{1}{2-\varrho^*_{\ell}(r-1,f)}$ for the function $f(n)$ as in Theorem~\ref{th2}. A similar construction can be found in \cite{chang2021}. Given integers $r,\ell$ and constants $\eps,\alpha, x\in(0,1)$, we construct (for large $n\in r\mathbb{N}$) an $n$-vertex graph $G$ by
\begin{enumerate}
  \item [$(i)$] first fixing a vertex $v$ such that $N(v)=xn$ and $G[N(v)]=:G'$ is a $K_{r-1}$-free subgraph with $\delta(G') \ge \varrho^*_{\ell}(r-1,f)xn-\eps n$ and $\alpha_{\ell}(G')\le f(\alpha xn)$;
  \item [$(ii)$] and then adding a clique of size $n-xn-1$ that is complete to $N(v)$.
\end{enumerate}
There exists no copy of $K_r$ covering $v$ and thus $G$ contains no $K_r$-factor; moreover, by choosing $x=\tfrac{1}{2-\varrho^*_{\ell}(r-1,f)}$, we obtain $\delta(G)\ge\frac{1}{2-\varrho^*_{\ell}(r-1,f)}n-\varepsilon n$ and $\alpha_{\ell}(G)=\alpha_{\ell}(G')\le f(\alpha n)$.

\medskip\noindent\textbf{Notation.} Throughout the paper we follow standard graph-theoretic notation~\cite{Diestel2017}. For a graph $G=(V,E)$, let $v(G)=|V|$ and $e(G)=|E|$. For $U\subseteq V$, $G[U]$ denotes the induced subgraph of $G$ on $U$. The notation $G-U$ is used to denote the induced subgraph after
removing $U$, that is, $G-U:=G[V\setminus U]$. For two subsets $A,B\subseteq V(G)$, we use $e(A,B)$ to denote the number of edges joining $A$ and $B$. Given a vertex $v\in v(G)$ and $X\subseteq V(G)$, denote by $N_X(v)$ the set of neighbors of $v$ in $X$ and let $d_X(v):=|N_X(v)|$. In particular, we write $N_{G}(v)$ for the set of neighbors of $v$ in $G$. We omit the index $G$ if the graph is clear from the context. Given a set $V$ and an integer $k$, we write $\binom{V}{k}$ for the family of all $k$-subsets of $V$. For all integers $a,b$ with $a\le b$, let $[a,b]:=\{i\in\mathbb{Z}:a\le i\le b\}$ and $[a]:=\{1,2,\ldots,a\}$.

When we write $\alpha \ll \beta\ll \gamma$, we always mean that $\alpha, \beta, \gamma$ are constants in $(0,1)$, and $\beta\ll \gamma$ means that there exists $\beta_0=\beta_0(\gamma)$ such that the subsequent arguments hold for all $0<\beta\le \beta_0$.
Hierarchies of other lengths are defined analogously. In the remaining proofs, we always take $\lambda=1/\lfloor\frac{r}{\ell}+1\rfloor$ and $f(n)\le n^{1-\omega(n)\log^{-\lambda}n}$ unless otherwise stated.


\section{Proof strategy and Preliminaries}
Our proof uses the absorption method, pioneered by the work of R\"odl, Ruci\'nski and Szemer\'edi~\cite{rodl2009perfect} on perfect matchings in hypergraphs, though similar ideas already appeared implicitly in previous works, e.g. Krivelevich~\cite{Krivelevich_TF}. 
A key step in the absorption method for $H$-factor problem is to show that for every set of $h:=|V(H)|$ vertices, the host graph $G$ contains $\Omega(n^b)$ $b$-vertex absorbers (to be defined shortly).
However,  as pointed out in~\cite{Balogh2016Triangle}, in our setting this is usually impossible because when we construct the absorbers using the independence number condition, it does not give such a strong counting.
Instead, a much weaker notion has been used in this series of works, that is, we aim to show that \emph{for (almost) every set of $h$ vertices, the host graph $G$ contains $\Omega(n)$ vertex-disjoint absorbers}.
Note that this weak notion of absorbers have been successfully used in our setting~\cite{Nenadov2018, knierim2019kr} and the randomly perturbed setting~\cite{chang2020factors}.

\subsection{The absorption method}


Following typical absorption strategies, our main work is to establish an absorbing set (see Lemma~\ref{absorbing lem}) and find an almost-perfect tiling (see Lemma~\ref{tiling lem}). 
We first introduce the following notions of absorbers and absorbing sets from \cite{Nenadov2018}.

\bdefinition Let $H$ be a graph with $h$ vertices and $G$ be a graph with $n$ vertices.
\begin{enumerate}
	\item We say that a subset $A \subseteq V(G)$ is a $\xi$\emph{-absorbing set} for some $ \xi > 0$ if for every subset $R \subseteq V(G)\setminus A$ with $|R| \le \xi n$ and $|A\cup R|\in h\mathbb{N}$, $G[A \cup R]$ contains an $H$-factor.
	\item Given a subset $S \subseteq V(G)$ of size $h$ and an integer $t$, we say that a subset $A_{S} \subseteq V(G)\setminus S$ is an $(S,t)$\emph{-absorber} if $|A_{S}| \le ht$ and both $G[A_{S}]$ and $G[A_{S} \cup S]$ contain an $H$-factor.
\end{enumerate}
\edefinition


Now we are ready to state our first crucial lemma, whose proof can be found in Section~\ref{sec4}.
\blemma[Absorbing Lemma]\label{absorbing lem}
Given positive integers $r,\ell$ with $r>\ell\ge2$ and constants $\mu,\gamma$ with $0<\gamma<\frac{\mu}{2}$, there exist $\alpha, \xi>0$ such that the following holds for sufficiently large $n\in r\mathbb{N}$. Let $G$ be an $n$-vertex graph with $\delta(G) \ge \max\{\frac{r-\ell}{r}+\mu,\frac{1}{2-\varrho^*_{\ell}(r-1,f)} + \mu \}n $ and $\alpha_{\ell}(G) < f(\alpha n)$. Then $G$ contains a $\xi$-absorbing set $A$ of size at most $\gamma n$.
\elemma

Our second crucial lemma is on almost $K_r$-factor as follows, whose proof will be given in Section~\ref{sec3}.

\blemma[Almost perfect tiling]\label{tiling lem}
Given positive integers $r,\ell$ such that $r > \ell \ge 2$ and positive constants $\mu,\delta$, the following statement holds for sufficiently large $n\in r\mathbb{N}$. Every $n$-vertex graph $G$ with $\delta(G) \ge \left( \frac{r-\ell}{r} + \mu \right)n $ and $\alpha_{\ell}(G) < f(n)$ contains a $K_{r}$-tiling that leaves at most $\delta n $ vertices in $G$ uncovered.
\elemma

Now we are ready to prove Theorem~\ref{th2} using Lemma~\ref{absorbing lem} and Lemma~\ref{tiling lem}.

\bproof[Proof of Theorem~\ref{th2}]

Given any positive integers $\ell,r$ with $r>\ell\ge2$ and a constant $\mu>0$.  Choose $\frac{1}{n}\ll\alpha \ll\delta\ll\xi\ll\gamma\ll\mu$. Let $G$ be an $n$-vertex graph with \[\delta(G)\ge \max\left\{\frac{r-\ell}{r}+\mu,  \frac{1}{2-\varrho^*_{\ell}(r-1,f)} + \mu \right\}n ~\text{and}~ \alpha_{\ell}(G) < f(\alpha n).\]
By Lemma~\ref{absorbing lem} with $\gamma\le\frac{\mu}{2}$, we find a $\xi$-absorbing set $A \subseteq V(G)$ of size at most $\gamma n$ for some $\xi>0$. Let $G_1:=G-A$. Then we have
$
\delta(G_1) \ge \left( \tfrac{r-\ell}{r} + \mu \right)n-\gamma n\ge \left( \tfrac{r-\ell}{r} + \tfrac{\mu}{2} \right)n$.
Therefore by applying Lemma \ref{tiling lem} on $G_1$ with $\delta$, we obtain a $K_{r}$-tiling $\mathcal{M}$ that covers all but a set $R$ of at most $\delta n$ vertices in $G_1$. Since $\delta\ll\xi$, the absorbing property of $A$ implies that $G[A \cup R]$ contains a $K_r$-factor $\mathcal{R}$, which together with $\mathcal{M}$ forms a $K_r$-factor in $G$.
\eproof

\section{Finding almost perfect tilings}\label{sec3}
In this section we address Lemma~\ref{tiling lem}. The proof of Lemma~\ref{tiling lem} uses the regularity method, a tiling result of Koml\'{o}s (Theorem~\ref{Komlos thm}), and dependent random choice (Lemma~\ref{hu}). We shall first give the crucial notion of regularity and then introduce the powerful Szemer\'edi's Regularity Lemma.

\subsection{Regularity}

Given a graph $G$ and a pair $(X, Y)$ of vertex-disjoint subsets in $V(G)$, the \emph{density} of $(X,Y)$ is defined as
\[
d(X,Y) = \frac{e(X,Y)}{|X||Y|}.
\]
For constants $\eps,d >0$, we say that $(X, Y)$ is an $\eps$-\emph{regular pair} with density at least $d$ (or $(X, Y)$ is $(\eps, d)$-\emph{regular}) if $d(X,Y)\ge d$ and for all $X' \subseteq X$, $Y' \subseteq Y$ with  $|X'| \ge \eps |X|$, $|Y'| \ge \eps |Y|$, we have
\[
|d(X',Y') - d(X,Y)|  \le  \eps.
\]
Moreover, a pair $(X, Y)$ is called $(\eps,d)$-$super$-$regular$ if $(X, Y)$ is $(\eps,d)$-regular, $d_{Y}(x)\ge d|Y|$ for all $x \in X$ and $d_{X}(y)\ge d|X|$ for all $y \in Y$. The following fact is an easy consequence of the definition of regularity.

\bfact\label{slicing lem}
Given constants $d,\eta >\eps>0$ and a bipartite graph $G =(X \cup Y, E)$, if $(X,Y)$ is $(\eps, d)$-regular, then for all $X_{1} \subseteq X$ and $Y_{1} \subseteq Y$ with $|X_{1}| \ge \eta|X|$ and $|Y_{1}| \ge \eta|Y|$, we have that $(X_{1}, Y_{1})$ is $(\eps',d-\eps)$-regular in $G$ for any $\eps ' \ge {\rm max}\{ \frac{\eps}{\eta},2\eps  \}$. 
\efact

Given a family of vertex-disjoint sets in $V(G)$ which are pairwise $\eps$-regular, we can find in each set a large subset such that every pair of resulting subsets is super-regular.

\begin{proposition}[see Proposition 2.6 in \cite{chang2021}]\label{superregular}
Given a constant $\varepsilon>0$ and integers $m,t$ with $t<\frac{1}{2\varepsilon}$, let $G$ be an $n$-vertex graph and $V_1,V_2,\ldots, V_{t+1}$ be vertex-disjoint subsets each of size $m$ in $G$ such that every pair $(V_i,V_j)$ is $\varepsilon$-regular with density $d_{ij}:=d(V_i,V_j)$. Then there exists for each $i\in[t+1]$ a subset $V'_i\subseteq V_i$ of size at least $(1-t\varepsilon)m$ such that every pair $(V_i',V_j')$ is $(2\varepsilon,d_{ij}-(t+1)\varepsilon)$-super-regular.
\end{proposition}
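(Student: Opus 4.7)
The plan is to prove this by the standard clean-up procedure: for each pair $(V_i,V_j)$, the $\varepsilon$-regularity lets us identify a small set of vertices in $V_i$ whose degree into $V_j$ is too far from the expected $d_{ij}|V_j|$, and we simply delete the union of all such bad vertices (over the $t$ choices of $j\neq i$) from $V_i$.

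First I would record the following consequence of $\varepsilon$-regularity: for any pair $(V_i,V_j)$, the set
\[
B_{ij}:=\bigl\{v\in V_i : d_{V_j}(v)<(d_{ij}-\varepsilon)m\bigr\}
\]
has size at most $\varepsilon m$. Indeed, otherwise $(B_{ij},V_j)$ would witness a density deviation larger than $\varepsilon$, contradicting regularity of $(V_i,V_j)$. Then I would define $V_i':=V_i\setminus\bigcup_{j\neq i} B_{ij}$ for each $i\in[t+1]$. Since there are $t$ other indices $j$, we get $|V_i'|\ge (1-t\varepsilon)m$, and the hypothesis $t<1/(2\varepsilon)$ gives $|V_i'|\ge m/2>0$.

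Next I would check the two properties of super-regularity for each remaining pair $(V_i',V_j')$. For the regularity condition, I would apply Fact~\ref{slicing lem} with $\eta=1-t\varepsilon>1/2$: since $\varepsilon/\eta<2\varepsilon$, the choice $\varepsilon'=2\varepsilon$ is admissible, so $(V_i',V_j')$ is $(2\varepsilon,d_{ij}-\varepsilon)$-regular, which in particular is $(2\varepsilon,d_{ij}-(t+1)\varepsilon)$-regular. For the degree condition, I would note that every surviving $v\in V_i'$ is outside $B_{ij}$, so $d_{V_j}(v)\ge(d_{ij}-\varepsilon)m$; the passage from $V_j$ to $V_j'$ loses at most $t\varepsilon m$ of its neighbors, giving
\[
d_{V_j'}(v)\ge(d_{ij}-\varepsilon)m-t\varepsilon m=(d_{ij}-(t+1)\varepsilon)m\ge(d_{ij}-(t+1)\varepsilon)|V_j'|,
\]
and symmetrically for vertices of $V_j'$.

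There is essentially no obstacle here; the entire content is a parameter check. The only mild subtlety worth being careful about is making sure the slicing step really delivers the $(2\varepsilon)$-regularity constant promised in the conclusion rather than something like $\varepsilon/(1-t\varepsilon)$, and this is exactly where the assumption $t<1/(2\varepsilon)$ is invoked (it forces $\eta>1/2$, so that $\varepsilon/\eta<2\varepsilon$ and the maximum in Fact~\ref{slicing lem} is attained by $2\varepsilon$).
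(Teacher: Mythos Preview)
Your argument is correct and is precisely the standard clean-up procedure one expects here. Note that the paper itself does not prove this proposition: it is quoted from \cite{chang2021} without proof, so there is no in-paper argument to compare against. Your proof is exactly the routine one behind such statements, and your handling of the only delicate point (using $t<1/(2\varepsilon)$ to ensure $\eta>1/2$ so that Fact~\ref{slicing lem} yields the $2\varepsilon$ regularity constant) is accurate.
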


We now state a degree form of the regularity lemma (see~\cite[Theorem 1.10]{komlos1996}).

\blemma[Degree form of the Regularity Lemma~\cite{komlos1996}]\label{reg}

For every $\eps > 0$ there is an $N = N(\eps )$ such that the following holds for any real number $d \in [0,1]$ and $n\in \mathbb{N}$. Let $G=(V,E)$ be a graph with $n$ vertices. Then there exists a partition $\mathcal{P}=\{V_{0},\ldots, V_{k}\} $ of $V$ and a spanning subgraph $G' \subseteq G$ with the following properties:
\begin{enumerate}
  \item [$(a)$]\label{a} $ \frac{1}{\eps}\le k \le N $;
  \item [$(b)$] $|V_{i}| \le \eps  n $ for $0 \le i \le k$ and $|V_{1}|=|V_{2}|=\cdots=|V_{k}| =m$ for some $m\in \mathbb{N}$;
  \item [$(c)$] $d_{G'}(v) > d_{G}(v) - (d + \eps )n$ for every $v \in V(G)$;
  \item [$(d)$] every $V_{i}$ is an independent set in $G'$;
  \item [$(e)$] each pair $(V_{i},V_{j})$, $1 \le i < j \le k$ is $\eps $-regular in $G'$ with density 0 or at least $d$.
\end{enumerate}
\elemma

A widely-used auxiliary graph accompanied with the regular partition is the reduced graph.
The \emph{$d$-reduced graph} $R_d$ of $\mathcal{P}$ is a graph defined on the vertex set $\{V_1,\ldots,V_k\}$ such that $V_i$ is connected to $V_j$ by an edge if $(V_i,V_j)$ has density at least $d$ in $G'$. So if $V_i$ is not connected to $V_j$, then $(V_{i},V_{j})$ has density $0$ by property $(e)$ above.
To ease the notation, we use $d_R(V_i)$ to denote the degree of $V_i$ in $R_d$ for each $i\in[k]$. Note that $R_d$ also can be regarded as a weighted graph in which the weight for each edge $V_iV_j$, denoted by $d_{ij}$ for simplicity, is exactly the density of the pair $(V_i,V_j)$ in $G'$. %
\begin{fact}\label{min degree}
For positive constants $d,\eps$ and $c$, let $G=(V,E)$ be a graph on $n$ vertices with $\delta(G) \ge cn$. Let $G'$ and $\mathcal{P}$ be obtained by applying Lemma~\ref{reg} on $G$ with constants $d$ and $\eps$. Let $R_d$ be the $d$-reduced graph as given above. Then for every $V_i\in V(R_d)$ we have
$d_{R}(V_i)\ge (c -2\eps  -d)k.$
\end{fact}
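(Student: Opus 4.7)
The plan is to fix an arbitrary vertex $v\in V_i$ and count its neighbors in the spanning subgraph $G'$, then translate a degree lower bound for $v$ in $G'$ into an edge-degree lower bound for $V_i$ in the reduced graph $R_d$.

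First I would invoke property $(c)$ of Lemma~\ref{reg} to obtain
$$d_{G'}(v) > d_G(v) - (d+\eps)n \ge (c-d-\eps)n.$$
Next, I would use properties $(d)$ and $(e)$ to locate where these neighbors can live. By $(d)$, the vertex $v$ has no neighbor in $V_i$. By $(e)$, if $V_j$ is not adjacent to $V_i$ in $R_d$ (with $j\ne 0$), then the pair $(V_i,V_j)$ has density $0$ in $G'$, so $v$ again has no neighbor in $V_j$. Thus all neighbors of $v$ in $G'$ must lie either in $V_0$ or in the union $\bigcup_{V_j\sim V_i} V_j$ taken over the $d_R(V_i)$ clusters adjacent to $V_i$ in $R_d$. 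This gives
$$d_{G'}(v) \le |V_0| + d_R(V_i)\cdot m \le \eps n + d_R(V_i)\cdot m.$$

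Combining the two bounds yields $d_R(V_i)\cdot m \ge (c-d-2\eps)n$. Finally, since the partition satisfies $n = |V_0| + km \ge km$, we have $n/m \ge k$, and therefore
$$d_R(V_i) \ge (c-d-2\eps)\cdot\frac{n}{m} \ge (c-2\eps-d)k,$$
which is the desired inequality. There is no real obstacle here: the only point requiring mild care is correctly packaging the exceptional cluster $V_0$ into the $\eps n$ slack and remembering to use $n\ge km$ rather than $n=km$ in the last step, since the regularity partition is only almost-equitable.
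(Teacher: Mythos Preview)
Your proof is correct and follows essentially the same approach as the paper's. The only cosmetic difference is that the paper sums edges from the whole cluster $V_i$ (bounding $\sum_{V_j\sim V_i} d_{ij}\,m^2$ from below by $(\delta(G')-|V_0|)m$ and then using $d_{ij}\le 1$), whereas you pick a single vertex $v\in V_i$ and track its neighborhood; both arguments rely on the same ingredients (properties $(c)$--$(e)$, $|V_0|\le \eps n$, and $n\ge km$) and arrive at the bound in the same way.
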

\bproof
Note that $|V_0|\le \eps n$ and $|V_i|=m$ for each $V_i\in V(R_d)$. Thus we have
\[
\sum_{V_j \sim V_i} d_{ij}|V_i||V_j|=e_{G'}(V_{i},\cup_{j \ne i}V_{j})\ge (\delta(G')-|V_0|)|V_i|\ge\left(c -2\eps  -d \right)nm,
\]
which implies
\[
d_{R}(V_i)=\sum_{V_j \sim V_i} 1\ge\sum_{V_j \sim V_i} d_{ij}\ge \frac{(c -2\eps -d )nm}{m^2}\ge(c -2\eps -d)k.
\]
\eproof

To find an almost perfect $K_r$-tiling, we shall also make use of the following result of Koml\'os~\cite{komlos2000tiling} on graph tilings. Given a graph $H$ on $r$ vertices, the \emph{critical chromatic number} of $H$ is defined as
$\chi_{cr}(H) = \frac{(k-1)r}{r - \sigma},$ where $k=\chi(H)$ and $\sigma =\sigma(H)$ denotes the smallest size of a color class over all $k$-colorings of $H$.

\btheorem[Koml\'os~\cite{komlos2000tiling}]\label{Komlos thm}
Given any graph $H$ and a constant $\gamma>0$, there exists an integer $n_{0}=n_{0}(\gamma, H)$ such that every graph $G$ of order $n \ge n_{0}$ with $\delta(G) \ge \left(1-\frac{1}{\chi_{cr}(H)}\right)n$ contains an $H$-tiling covering all but at most $\gamma n$ vertices.
\etheorem
Based on this result, we will first apply the regularity lemma to $G$ to get a reduced graph $R:=R_d$ for a constant $d>0$, and then apply Theorem~\ref{Komlos thm} to get an $H$-tiling of $R$ covering almost all vertices for a suitably-chosen auxiliary graph $H$.
To get an almost $K_r$-tiling of $G$ from this almost $H$-tiling of $R$, we will use the following lemma 
which says if we can find a $K_q$ in a copy of $H$ in $R$, then we can find a $K_{pq}$ in $G$ under certain conditions on $\alpha_p(G)$. Its proof follows from that of Claim 6.1 in \cite{BALOGH2015148}, where a similar assumption on $\alpha(G)$ (instead of $\alpha_p(G)$) is used. For completeness we include a proof of Lemma~\ref{hu} in the appendix.


\begin{lemma}\label{hu}
Given a constant $d>0$ and integers $p,q\ge 2$, there exist $C,\eps$ such that for any constant $\eta>0$ the following holds for every sufficiently large $n\in \mathbb{N}$ and $g(n):=n^{1-C\log^{-1/q}n}$. Let $G$ be an $n$-vertex graph with $\alpha_p(G) < g(n)$ and $V_1, V_2,\ldots,V_{q}$ be pairwise vertex-disjoint sets of vertices in $G$ with $|V_i|\ge \eta n$ for each $i\in[q]$ and every pair $(V_i,V_j)$ being $(\eps,d)$-regular. Then there exists a copy of $K_{pq}$ in $G$ which contains exactly $p$ vertices in each $V_i$ for $i\in [q]$.
\end{lemma}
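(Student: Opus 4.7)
My plan is to build the required $K_{pq}$ one $K_p$ at a time, using dependent random choice (DRC) iteratively and exploiting the hypothesis $\alpha_p(G)<g(n)$ to extract each clique once the ambient set is large enough. Initialize $W^{(0)}_i:=V_i$. For $k=1,\ldots,q-1$, having selected $p$-sets $S_1\subseteq V_1,\ldots,S_{k-1}\subseteq V_{k-1}$ that jointly induce a $K_{p(k-1)}$, write $W^{(k-1)}_i:=V_i\cap\bigcap_{j<k}N(S_j)$ for $i\ge k$. I want to choose $S_k\subseteq W^{(k-1)}_k$ to be a $K_p$ with $|W^{(k-1)}_i\cap N(S_k)|\ge s_k$ for every $i>k$, where $s_k=n^{1-\beta_k}$ is a prescribed threshold and the $\beta_k$ are increasing in $k$. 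At the terminal step $k=q$ the set $W^{(q-1)}_q$ has size at least $s_{q-1}$, and provided $s_{q-1}>g(n)>\alpha_p(G)$ it contains a $K_p$; the union $S_1\cup\cdots\cup S_q$ is then the desired $K_{pq}$.

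The heart of the argument is the DRC at step $k$. Pick independent uniformly random $t_k$-subsets $T_i\subseteq W^{(k-1)}_i$ for each $i>k$, with $t_k=\Theta(\log^{1-1/q}n)$, and set $U_k:=W^{(k-1)}_k\cap\bigcap_{i>k}N(T_i)$. Provided each pair $(W^{(k-1)}_k,W^{(k-1)}_i)$ inherits density close to $d$ from the original $(\eps,d)$-regularity, a standard DRC estimate gives $\mathbb{E}[|U_k|]\ge(1-o(1))|W^{(k-1)}_k|(d-\eps)^{t_k(q-k)}$. Any ``bad'' $p$-subset $S\subseteq W^{(k-1)}_k$---one with $|N(S)\cap W^{(k-1)}_i|<s_k$ for some $i>k$---falls in $U_k$ with probability at most $(s_k/|W^{(k-1)}_i|)^{t_k}$, so $\mathbb{E}[\#\text{bad in }U_k]\le(q-k)|W^{(k-1)}_k|^p(s_k/|W^{(k-1)}_i|)^{t_k}$. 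A direct computation then shows that with $t_k=\Theta(\log^{1-1/q}n)$ and $\beta_k-\beta_{k-1}=\Theta(\log^{-(q-1)/q}n)$, both $\mathbb{E}[|U_k|]>4g(n)$ and $\mathbb{E}[\#\text{bad in }U_k]<\tfrac12\mathbb{E}[|U_k|]$ hold. By the probabilistic method, some realization satisfies $|U_k|-\#\text{bad}>g(n)$; deleting one vertex per bad $p$-subset yields $U_k'\subseteq U_k$ with no bad $p$-subset and $|U_k'|>g(n)>\alpha_p(G)$, so $U_k'$ contains a $K_p$, taken as $S_k$.

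Summing the $q-1$ increments, $\beta_{q-1}\le(q-1)c\log^{-(q-1)/q}n$ for a constant $c$ depending on $p,q,d,\eps$. The closing condition $\beta_{q-1}<C\log^{-1/q}n$ reduces to $(q-1)c\log^{(2-q)/q}n<C$, which holds for $n$ large when $q\ge 3$ (the exponent is negative) and is a constant inequality when $q=2$, ensured by taking $C$ large in terms of $c$. Hence $s_{q-1}>g(n)$ and the last step closes.

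The main obstacle is that for $k\ge 2$ each $W^{(k-1)}_i$ is sublinear in $n$, so Fact~\ref{slicing lem} does not directly hand us regularity or the density bound $\approx d$ on the restricted pairs $(W^{(k-1)}_i,W^{(k-1)}_j)$. I would address this by imposing at each DRC step an additional typicality condition on the chosen $S_k$---that it leaves each pair $(N(S_k)\cap W^{(k-1)}_i,N(S_k)\cap W^{(k-1)}_j)$ with density close to $d$ and with a mild regularity loss---and including the violations of this typicality in the ``bad'' count. A routine regularity count shows the extra bad events contribute only an $O(\eps)$ fraction of $\binom{|W^{(k-1)}_k|}{p}$ to the expected bad count, so the probabilistic argument still closes and the approximate density $d$ propagates through all $q-1$ iterations.
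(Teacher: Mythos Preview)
Your scheme has a real gap at the point you yourself flag. Once $k\ge 2$ the sets $W^{(k-1)}_i$ have size $s_{k-1}=n^{1-\Theta(\log^{-(q-1)/q}n)}=o(n)$, so the slicing lemma is silent and nothing about the original $(\eps,d)$-regularity survives. Your proposed fix is to add ``density/regularity typicality'' of the restricted pairs to the list of bad events for $S_k$ and absorb the atypical $S_k$ into the DRC deletion step. But the deletion step only works when each bad $p$-set $S$ lands in $U_k$ with \emph{tiny} probability; this is the case for your size-bad sets (there $\Pr[T_i\subseteq N(S)]\le (s_k/s_{k-1})^{t_k}=n^{-\Theta(1)}$), and it is \emph{not} the case for density-bad sets, whose survival probability is of the same order as that of a generic $p$-set. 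By your own estimate there are $O(\eps)\binom{|W^{(k-1)}_k|}{p}$ density-bad $p$-sets, while $\mathbb{E}|U_k|\le |W^{(k-1)}_k|$; thus the required inequality $\mathbb{E}[\#\text{bad}]<\tfrac12\mathbb{E}|U_k|$ would force $\eps\ll |W^{(k-1)}_k|^{1-p}$, which is impossible for $p\ge 2$. There is a second problem: even granting that the pairs $(W^{(k-1)}_k,W^{(k-1)}_i)$ have density $\approx d$, this does not give your lower bound $\mathbb{E}|U_k|\gtrsim |W^{(k-1)}_k|(d-\eps)^{t_k(q-k)}$, because the product $\prod_{i>k}\Pr[T_i\subseteq N(v)]$ inside the sum over $v$ cannot be handled by convexity when $q-k\ge 2$; you would need per-vertex degree control into each $W^{(k-1)}_i$, i.e.\ a regularity-type statement on sublinear sets, and there is no reason such a statement propagates.

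The paper sidesteps this entirely by passing to the $q$-uniform $q$-partite hypergraph $H^0$ of transversal $K_q$'s (the counting lemma gives $e(H^0)\ge \eps_0 N^q$), and then iterating the \emph{hypergraph} dependent random choice of Conlon--Fox--Sudakov $q-2$ times to produce a chain $H^0,H^1,\dots,H^{q-2}$ of dense lower-uniformity hypergraphs with no ``dangerous'' edge-sets of the relevant weight. The density of $H^i$ replaces your inter-part density and is preserved by the lemma, with no appeal to graph regularity after the initial count. Only at the bottom does one apply ordinary DRC, once, to the dense bipartite graph $H^{q-2}$ (with $t=s=\log^{1/q}n$) to obtain a set of size $>g(n)$ in which every $p$-tuple has $\ge g(n)$ common $H^{q-2}$-neighbours; then one climbs back up, finding one $K_p$ at a time via the $\alpha_p$ hypothesis and the no-dangerous-set property. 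The crucial difference is that hypergraph DRC controls how \emph{collections of edges} (not single vertices) of $H^i$ extend to $H^{i-1}$, which is exactly the information your graph-level iteration loses after the first step.
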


\subsection{Proof of Lemma~\ref{tiling lem}}
\bproof[Proof of Lemma~\ref{tiling lem}]
Given $r,\ell\in \mathbb{N}$ such that $r>\ell\ge2$ and $\mu>0,\delta>0$, we choose
\[
\tfrac{1}{n}\ll \eps \ll\mu,\delta,\tfrac{1}{r}.
\]
Let $G$ be an $n$-vertex graph with $\delta(G) \ge \left( \frac{r-\ell}{r} + \mu \right)n $ and $\alpha_{\ell}(G) < f(n)$. By applying Lemma~\ref{reg} on $G$ with constants $\eps>0$ and $d:=\frac{\mu}{4}$, we obtain a partition $\mathcal{P}=\{V_{0},\ldots,V_{k}\}$ for some $\frac{1}{\eps}\le k \le N$ and a spanning subgraph $G' \subseteq G$ with properties (a)-(e) as stated. Let $m:=|V_i|$ for all $i\in[k]$ and $R_d$ be the corresponding $d$-reduced graph of $\mathcal{P}$. Then it follows from Fact~\ref{min degree} that $\delta(R_{d}) \ge (\frac{r-\ell}{r}+\frac{\mu}{4}) k$.

Let $r=x\ell+y$ for some integers $x,y$ with $x\ge 1,1\le y\le \ell$. Note that the complete $(x+1)$-partite graph $H:=K_{y,\ell,\ldots,\ell}$ has $\chi_{cr}(H)=\frac{r}{\ell}$. Now we apply Theorem~\ref{Komlos thm} on $R_{d}$ with $\gamma=\frac{\delta}{2}$ and $H=K_{y,\ell,\ldots,\ell}$ to obtain a family $\mathcal{H}$ of vertex-disjoint copies of $H$ that cover all but at most $\frac{\delta}{2} k$ vertices of $R_{d}$.

Given a copy of $H$ in $\mathcal{H}$, without loss of generality, we may assume that its vertex set is $\{V_{1},\ldots, V_{r}\}$ together with the parts denoted by
\[\mathcal{W}_1=\{V_1,\ldots,V_y\}~ \text{and} ~ \mathcal{W}_{s+1}=\{V_{y+1+(s-1)\ell},\ldots,V_{y+s\ell}\}~\text{for}~s\in [x].\] Note that every pair of clusters $V_i,V_j$ from distinct parts forms an $\eps$-regular pair with density at least $d$.

We shall greedily embed in the original graph $G$ vertex-disjoint copies of $K_r$ that together cover almost all the vertices in $\cup_{i=1}^{r}V_i$.
Now for each $i\in[y]$ we divide $V_{i}$ arbitrarily into $\ell$ subclusters $V_{i,1},\ldots,V_{i,\ell}$ of (almost) equal size. For each $j\in [y+1,r]$ we divide $V_{j}$ into $y$ subclusters $V_{j,1},\ldots,V_{j,y}$ of (almost) equal size. Here for simplicity we may further assume that $|V_{i,i'}|=\frac{m}{\ell}$ for $i\in[y],i'\in[\ell]$ and $|V_{j,j'}|=\frac{m}{y}$ for every $j\in[y+1,r],j'\in[y]$.
We call a family $\{V_{i_s,j_s}\}_{s=1}^{x+1}$ of $x+1$ subclusters \emph{legal} if $V_{i_s}\in \mathcal{W}_s$ for each $s\in[x+1]$, i.e., $\{V_{i_s}\}_{s=1}^{x+1}$ forms a copy of $K_{x+1}$ in $R_d$.
Note that each $\mathcal{W}_s$ ($s\in [x+1]$) contains exactly $y\ell$ subclusters in total. Therefore we can greedily partition the set of all subclusters into $y\ell$ pairwise disjoint legal families.

Now if we have a $K_r$-tiling in $G$ for every legal family $\{V_{i_s,j_s}\}_{s=1}^{x+1}$, that covers all but at most $\frac{r\delta}{4y\ell} m$ vertices of $\bigcup_{s=1}^{x+1} V_{i_s,j_s}$, then we can find a $K_r$-tiling covering all but at most $\frac{r\delta}{4}m$ vertices of $V_1\cup V_2\cup\cdots\cup V_r$. Applying this to all copies of $H$ from $\mathcal{H}$ would give us a $K_{r}$-tiling in $G$ covering all but at most
\[
|V_0|+\tfrac{\delta}{2} k m+ |\mathcal{H}|\tfrac{r\delta}{4}m< \eps  n+ \tfrac{\delta}{2}n+ \tfrac{\delta}{4}n< \delta n
\]
vertices. So to complete the proof of Lemma \ref{tiling lem}, it is sufficient to prove the following claim.

\bclaim\label{embedding lem}
Given any legal family $\{V_{i_s,j_s}\}_{s=1}^{x+1}$, $G[\bigcup_{s=1}^{x+1} V_{i_s,j_s}]$ admits a $K_r$-tiling covering all but at most $\frac{r\delta}{4y\ell} m$ vertices of $\bigcup_{s=1}^{x+1} V_{i_s,j_s}$.
\eclaim
\bproof[Proof of claim]\renewcommand\qedsymbol{$\blacksquare$}
For convenience,
we write $Y_s:=V_{i_{s},j_{s}}$ with $s\in [x+1]$. Recall that $|Y_1|=\frac{m}{\ell}$ and $|Y_s|=\frac{m}{y}$ for $s\in[2,x+1]$. If we can greedily pick vertex-disjoint copies of $K_r$ such that each contains exactly $y$ vertices in $Y_{1}$ and $\ell$ vertices in $Y_{s}$ for each $s\in[2,x+1]$, then almost all vertices in $\cup_{s=1}^{x+1} Y_s$ can be covered in this way. Now it suffices to show that for any $Y_s'\subseteq Y_s$ with $s\in[x+1]$, each of size at least $\frac{\delta}{4y\ell} m$, there exists a copy of $K_r$ with exactly $y$ vertices inside $Y_1'$ and $\ell$ vertices inside each $Y_s'$.

For any distinct $s,t\in[x+1]$, the pair $(V_{i_s},V_{i_t})$ is $\eps$-regular with density at least $d$. Then Fact~\ref{slicing lem} implies that every two sets from $Y_1',\ldots,Y_{x+1}'$ form an $\eps'$-regular pair with density at least $d-\eps$, where $\eps'=\frac{4y\ell}{\delta}\eps$.
Therefore as $\frac{1}{n}\ll \eps\ll \delta,\frac{1}{r}$, by applying Lemma~\ref{hu} on $G$ with $V_i=Y_i', p=\ell,q=x+1, \eta=\frac{\delta}{4y\ell} \frac{m}{n}$ and the fact that $f(n)\le g(n)$, we obtain a copy of $K_{(x+1)\ell}$ which contains exactly $\ell$ vertices in each $Y_s'$ for $s\in [x+1]$. Thus we obtain a desired copy of $K_r$ by discarding arbitrary $\ell-y$ vertices from $Y_1'$ from the clique above.
\eproof
\eproof

\section{Building an absorbing set}\label{sec4}



The construction of an absorbing set is now known via a novel idea of Montgomery~\cite{Montgomery}, provided that (almost) every set of $h$ vertices has linearly many vertex-disjoint absorbers as aforementioned.
Such an approach is summarized as the following result by Nenadov and Pehova~\cite{Nenadov2018}.

\blemma\emph{\cite{Nenadov2018}}\label{Nenadov lem}
Let $H$ be a graph with $h$ vertices and let $\gamma > 0$ and $t \in \mathbb{N}$ be constants.  Then there exist $\xi = \xi(h,t,\gamma)$ and $n_0\in \mathbb{N}$ such that the following statement holds. Suppose that $G$ is a graph with $n \ge n_{0}$ vertices such that every $S \in \tbinom{V(G)}{h} $ has a family of at least $\gamma n$ vertex-disjoint $(S,t)$-absorbers. Then $G$ contains a $\xi$-absorbing set of size at most $\gamma n$.
\elemma

\subsection{Finding absorbers}

In order to find linearly many vertex-disjoint absorbers for (almost) every $h$-subset, we shall use a notion of reachability introduced by Lo and Markstr\"om~\cite{2015Lo}. Here we introduce a slightly different version in our setup.
Let $G$ be a graph of $n$ vertices and $H$ be a graph of $h$ vertices. For any two vertices $u, v \in V(G)$, a set $S\subset V(G)$ is called an \emph{$H$-reachable set} for $\{u, v\}$ if both $G[\{u\} \cup S]$ and $G[\{v\} \cup S]$ have $H$-factors. For $ t \ge 1$ and $\beta > 0$, we say that two vertices $u$ and $v$ are \emph{$(H,\beta,t)$-reachable} (in $G$) if there are $\beta n$ vertex-disjoint $H$-reachable sets $S$ in $G$, each of size at most $ht-1$. Moreover, we say that a vertex set $U \subseteq V(G)$ is \emph{$(H,\beta,t)$-closed} if any two vertices in $U$ are $(H,\beta,t)$-reachable in $G$. Note that the corresponding $H$-reachable sets for $u,v$ may not be included in $U$. We say $U$ is $(H,\beta,t)$-\emph{inner-closed} if $U$ is $(H,\beta,t)$-\emph{closed} and additionally the corresponding $H$-reachable sets for every pair $u,v$ also lie inside $U$.

The following result from~\cite{han2021ramseyturan} builds a sufficient condition to ensure that every $h$-subset $S$ has linearly many vertex-disjoint absorbers.

\blemma[\cite{han2021ramseyturan}]\label{absorbers}
Given $h,t\in\mathbb{N}$ with $h\ge3$ and $\beta >0$, the following holds for any $h$-vertex graph $H$ and sufficiently large $n\in\mathbb{N}$. Let $G$ be an $n$-vertex graph such that $V(G)$ is $(H,\beta,t)$-closed. Then every $S \in \tbinom{V(G)}{h}$ has a family of at least $\frac{\beta}{h^3t} n$ vertex-disjoint $(S,t)$-absorbers.
\elemma

Based on this lemma, it suffices to show that $V(G)$ is closed. However, we shall show a slightly weaker result, namely, there exists a small vertex set $B$ such that the induced subgraph $G-B$ is inner-closed. The proof of Lemma~\ref{closed} can be found in Section~\ref{sec4.2}.

\blemma\label{closed}
Given $r,\ell\in\mathbb{N}$ with $r > \ell \ge 2$ and $\tau, \mu$ with $0<\tau<\mu$, there exist $\alpha, \beta>0$ such that the following holds for sufficiently large $n\in \mathbb{N}$. Let $G$ be an $n$-vertex graph with \[\delta(G) \ge \max\left\{\frac{r-\ell}{r}+\mu,  \frac{1}{2-\varrho^*_{\ell}(r-1,f)} + \mu \right\}n ~\text{and}~\alpha_{\ell}(G) < f(\alpha n).\] Then $G$ admits a partition $V(G)=B\cup U$ such that $|B|\le \tau n$ and $U$ is $(K_r,\beta,4)$-inner-closed.
\elemma


Then by Lemma~\ref{absorbers} applied on $G[U]$, we can easily get the following corollary.

\bcorollary\label{Coro}

Given positive integers $r,\ell$ with $r>\ell\ge2$ and $\tau, \mu$ with $0<\tau<\mu$, there exist $0<\alpha<\beta <\mu^3$ such that the following holds for sufficiently large $n\in \mathbb{N}$. Let $G$ be an $n$-vertex graph with $\delta(G) \ge \left( \frac{1}{2-\varrho^*_{\ell}(r-1,f)} + \mu \right)n $ and $\alpha_{\ell}(G) < f(\alpha n)$. Then $G$ admits a partition $V(G)=B\cup U$ such that $|B|\le \tau n$ and every $S \in \tbinom{U}{r}$ has a family of at least $\frac{\beta}{4r^3} n$ vertex-disjoint $(S,4)$-absorbers in $U$.

\ecorollary

To deal with the exceptional vertex set $B$, we shall pick mutually vertex-disjoint copies of $K_r$ each containing a vertex in $B$. To achieve this, one has to make sure that every vertex $v\in V(G)$ is covered by many copies of $K_r$ in $G$ (the aforementioned \emph{cover threshold}). The following result enables us to find linearly many copies of $K_r$ covering any given vertex.

\begin{proposition}\label{coverthreshold}
Given $r,\ell \in \mathbb{N}$ and a constant $\mu >0$, there exists $\alpha >0$ such that for all sufficiently large $n$ the following holds. Let $G$ be an $n$-vertex graph with $\delta(G) \ge \left( \frac{1}{2-\varrho^*_{\ell}(r-1,f)} + \mu \right)n$ and $\alpha_{\ell}(G)\le f(\alpha n)$. If $W$ is a subset of $V(G)$ with $|W|\le\frac{\mu}{2}n$, then for each vertex $u\in V(G)\setminus W$, $G[V(G)\setminus W]$ contains a copy of $K_r$ covering $u$.
\end{proposition}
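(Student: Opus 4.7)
The plan is to locate a copy of $K_{r-1}$ in the cleaned neighborhood $N':=N_G(u)\setminus W$, since then $N'\cup\{u\}$ contains a $K_r$ covering $u$ inside $G[V(G)\setminus W]$. Write $\rho:=\varrho^*_\ell(r-1,f)$ and $x:=\tfrac{1}{2-\rho}$, so that $1-x=\tfrac{1-\rho}{2-\rho}$. The core idea is that the hypothesis $\delta(G)\ge(x+\mu)n$ leaves precisely enough slack so that $G[N']$ cannot match the Ramsey--Tur\'an bound $\rho\cdot |N'|$ witnessed by $\varrho^*_\ell(r-1,f)$, forcing a $K_{r-1}$.

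Fix $\eps':=\mu(2-\rho)/4$, small enough (as verified below) for the following computation. Using
\[
\rho=\lim_{\alpha'\to 0}\limsup_{n'\to\infty}\frac{\mathbf{RT}^*_\ell(n',K_{r-1},f(\alpha' n'))}{n'},
\]
choose $\alpha'_0>0$ with $\limsup_{n'\to\infty}\mathbf{RT}^*_\ell(n',K_{r-1},f(\alpha'_0 n'))/n'\le\rho+\eps'/2$, and then set $\alpha:=\alpha'_0\cdot x$. Thus for all sufficiently large $n'$, every $n'$-vertex $K_{r-1}$-free graph $H$ with $\alpha_\ell(H)\le f(\alpha'_0 n')$ satisfies $\delta(H)\le(\rho+\eps')n'$.

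Now since $|W|\le\mu n/2$, we have $|N'|\ge\delta(G)-|W|\ge(x+\mu/2)n$, and for each $v\in N'$,
\[
d_{G[N']}(v)\ge|N'|-(n-\delta(G))\ge|N'|-\Bigl(\tfrac{1-\rho}{2-\rho}-\mu\Bigr)n.
\]
Substituting $|N'|\ge (x+\mu/2)n$ and using the choice of $\eps'$, one checks directly that the right-hand side strictly exceeds $(\rho+\eps')|N'|$. On the other hand, by monotonicity of $f$ together with $\alpha n=\alpha'_0\cdot xn\le\alpha'_0|N'|$,
\[
\alpha_\ell(G[N'])\le\alpha_\ell(G)\le f(\alpha n)\le f(\alpha'_0|N'|).
\]
Hence if $G[N']$ were $K_{r-1}$-free, the extremal bound from the previous paragraph applied with $n':=|N'|$ would force $\delta(G[N'])\le(\rho+\eps')|N'|$, contradicting the degree estimate. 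Therefore $G[N']$ contains a copy of $K_{r-1}$, and adjoining $u$ yields the required $K_r$ in $G[V(G)\setminus W]$.

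The only real subtlety is the correct ordering of quantifiers in the $\lim$--$\limsup$ definition of $\varrho^*_\ell$: one must pick $\eps'$ first (from $\mu$), then $\alpha'_0$ (via the outer $\lim$) to make the limsup small, and finally $\alpha$ (so that the independence-number bound on $G$ transports cleanly to $G[N']$). Once the quantifiers are in place, the argument reduces to a one-line neighborhood inequality exploiting the identity $x(2-\rho)=1$.
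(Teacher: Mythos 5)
Your proof is correct and follows essentially the same approach as the paper: both pass to the neighborhood $N'$ of $u$ inside $V(G)\setminus W$, show that the minimum degree of $G[N']$ exceeds $\bigl(\varrho^*_\ell(r-1,f)+\text{const}\bigr)|N'|$, and then invoke the definition of $\varrho^*_\ell$ to force a $K_{r-1}$ there. You compute the degree bound by subtracting the non-neighbor count while the paper uses the two-neighborhood intersection $d_{G_1}(u)+d_{G_1}(v)-n$, and you are more explicit about unpacking the $\lim$--$\limsup$ quantifiers to extract $\alpha$, but these are cosmetic differences; the argument is the same.
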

\bproof
 We choose $\frac{1}{n}\ll \alpha\ll \mu,\frac{1}{\ell}$ and let $G_1:=G[V(G)\setminus W]$. It suffices to show that for each vertex $u\in V(G_1)$, there is a copy of $K_{r-1}$ in $N_{G_1}(u)$. Note that for every vertex $u$ in $G_1$, we have $|N_{G_1}(u)|\ge \delta(G_1)\ge\delta(G)-|W|\ge\left( \frac{1}{2-\varrho^*_{\ell}(r-1,f)} + \frac{\mu}{2} \right)n$. Given any vertex $v\in N_{G_1}(u)$ with $d_{G_1}(u,v):=|N_{G_1}(u)\cap N_{G_1}(v)|$, we have
\[
  \begin{split}
d_{G_1}(u,v)-\left(\varrho^*_{\ell}(r-1,f)+\tfrac{\mu}{4} \right)d_{G_1}(u)&\ge d_{G_1}(u)+d_{G_1}(v)-n-\left(\varrho^*_{\ell}(r-1,f)+\tfrac{\mu}{4} \right)d_{G_1}(u)\\
&\ge\left(2-\varrho^*_{\ell}(r-1,f)-\tfrac{\mu}{4} \right)\delta(G_1)-n>\tfrac{\mu}{8}n >0.
 \end{split}
 \]
Thus $\delta(G[N_{G_1}(u)])>(\varrho^*_{\ell}(r-1,f)+\frac{\mu}{4})|N_{G_1}(u)|$. Therefore by the definition of $\varrho^*_{\ell}(r-1,f)$ and the choice that $\frac{1}{n}\ll \alpha\ll \mu$, $G[N_{G_1}(u)]$ contains a copy of $K_{r-1}$, which together with $u$ yields a copy of $K_r$ in $G_1$.
\eproof

Now we are ready to prove Lemma~\ref{absorbing lem} using Corollary~\ref{Coro} and Proposition~\ref{coverthreshold}.

\bproof[Proof of Lemma~\ref{absorbing lem}]
Given positive integers $\ell,r$ with $r>\ell\ge2$ and $\mu,\gamma$ with $0<\gamma\le\frac{\mu}{2}$, we choose $\frac{1}{n}\ll\alpha\ll\xi\ll\beta,\tau\ll \gamma,\frac{1}{r}$.  Let $G$ be an $n$-vertex graph with $\delta(G) \ge \left( \frac{1}{2-\varrho^*_{\ell}(r-1,f)} + \mu \right)n$, $\alpha_{\ell}(G) < f(\alpha n)$ and $n\in r\mathbb{N}$. Then Corollary \ref{Coro} implies that $G$ admits a partition $V(G)=B \cup U$ such that $|B|\le \tau n$ and every $S \in \tbinom{U}{r}$ has a family of at least $\frac{\beta}{4r^3} n$ vertex-disjoint $(S,4)$-absorbers in $U$. Let $G_1:=G[U]$. Then by applying Lemma \ref{Nenadov lem} on $G_1$, we obtain in $G_1$ a $\xi$-absorbing subset $A_1$ of size at most $\frac{\beta}{4r^3} n$.

Now, we shall iteratively pick vertex-disjoint copies of $K_r$ each covering at least a vertex in $B$ whilst avoiding using any vertex in $A_1$, and we claim that every vertex in $B$ can be covered in this way.

Let $G_2:=G-A_1$.
For $u\in B$, we apply Proposition~\ref{coverthreshold} iteratively to find a copy of $K_r$ covering $u$ in $G_2$, while avoiding $A_1\cup B$ and all copies of $K_r$ found so far.
Because of the fact that $\beta,\tau\ll\gamma,\frac{1}{r}$, this is possible as during the process, the number of vertices that we need to avoid is at most $|A_1|+r|B|\le \frac{\beta}{4r^3}n + r \tau n\le\frac{\mu}{2}n$.
Let $K$ be the union of the vertex sets over all copies of $K_r$ covering $B$ and $A:=A_1\cup K$.
Recall that $A_1$ is a $\xi$-absorbing set for $G_1=G-B$, and $B\subseteq K\subseteq A$. Then it is easy to check that $A$ is a $\xi$-absorbing set for $G$ and
\[
|A|=|A_1|+|K|\le \tfrac{\beta}{4r^3} n+r\tau n\le \gamma n,
\]
where the last inequality follows since $\beta,\tau\ll\gamma, \frac{1}{r}$.
\eproof
Now it remains to prove Lemma~\ref{closed}, which is done in the next subsection.
\subsection{Proof of Lemma~\ref{closed}}\label{sec4.2}
The proof of Lemma~\ref{closed} makes use of Szemer\'{e}di's Regularity Lemma and a result in~\cite{chang2021}.
\blemma\emph{\cite[Lemma 5.1]{chang2021}}\label{reachable lem}
Given $n,r,\ell\in\mathbb{N}$ with $r > \ell \ge 2$ and a monotone increasing function $f(n)$, for all $\tau, \mu$ with $0<\tau<\mu$, there exist positive constants $\beta_1, \gamma_1$ and $\alpha>0$ such that the following holds for sufficiently large $n\in \mathbb{N}$. Let $G$ be an $n$-vertex graph with $\delta(G) \ge \left( \frac{1}{2-\varrho^*_{\ell}(r-1,f)} + \mu \right)n $ and $\alpha_{\ell}(G) \le f(\alpha n)$. Then $G$ admits a partition $V(G)=B\cup U$ such that $|B|\le \tau n$ and every vertex in $U$ is $(K_r,\beta_1,1)$-reachable to at least $\gamma_1n$ other vertices in $U$ with all the corresponding $K_r$-reachable sets belonging to $U$.
\elemma


\begin{proof}[Proof of Lemma~\ref{closed}]
Given $r,\ell\in\mathbb{N}$ with $r > \ell \ge 2$ and $\tau, \mu$ with $0<\tau<\mu$, we choose constants \[\tfrac{1}{n}\ll\alpha\ll\beta\ll\eps\ll \beta_1,\gamma_1\ll \tau,\mu,\tfrac{1}{\ell}\] and let $G$ be an $n$-vertex graph with $\delta(G) \ge \max\{\frac{r-\ell}{r}+\mu,  \frac{1}{2-\varrho^*_{\ell}(r-1,f)} + \mu \}n $ and $\alpha_{\ell}(G) < f(\alpha n)$. Then by applying Lemma~\ref{reachable lem}, we obtain a partition $V(G)=B\cup U$ such that $|B|\le \tau n$ and every vertex in $U$ is $(K_r,\beta_1,1)$-reachable $({\rm in} \ G[U])$ to at least $\gamma_1n$ other vertices in $U$. For any two vertices $u,v\in U$, we shall prove in $G[U]$ that $u,v$ are $(K_r, \beta,4)$-reachable.

Let $X$ and $Y$ be the sets of vertices that are $(K_r, \beta_1, 1)$-reachable to $u$ and $v$, respectively. By taking subsets from them and renaming if necessary, we may further assume that $X\cap Y=\emptyset$ and $|X|=|Y|=\frac{\gamma_1n}{2}$. Then by applying Lemma~\ref{reg} on $G$ with positive constants $\eps\ll \beta_1,\gamma_1$ and $d:=\frac{\mu}{4}$, we obtain a refinement $\mathcal{P}:=\{V_0,V_{i},\ldots,V_k\}$ of the original partition $\{X,Y,V(G)-X-Y\}$ and a spanning subgraph $G'\subseteq G$ with properties (a)-(e), where we let $m:=|V_i|$ for all $i\in[k]$ and $R_d$ be the corresponding $d$-reduced graph.
Without loss of generality, we may assume that $V_1\subseteq X$ and $V_2\subseteq Y$. Note that by Fact~\ref{min degree}, we can observe that $\delta(R_d)\ge \max\{\frac{r-\ell}{r}+\frac{\mu}{2},  \frac{1}{2-\varrho^*_{\ell}(r-1,f)} + \frac{\mu}{2} \}k\ge(\frac{1}{2}+\frac{\mu}{2})k$. Let $V_3$ be a common neighbor of $V_1$ and $V_2$ in $R_d$.

Now we shall show that $u$ and $v$ are $(K_r, \beta,4)$-reachable. We write $r=\ell x+y$ for some integers $x,y$ with $x>0,0\le y\le \ell-1$. Note that $\delta(R_d)\ge (\frac{r-\ell}{r}+\frac{\mu}{2})k\ge (\frac{x-1}{x}+\frac{\mu}{2})k$. Thus every $x$ vertices in $R_d$ have at least $\frac{\mu}{2}xk$ common neighbors and we can greedily pick two copies of $K_{x+1}$ in $R_d$ that contain the edge $V_1V_3$ and $V_2V_3$ respectively and overlap only on the vertex $V_3$. We use $\mathcal{A}=\{V_1,V_3, V_{a_1},V_{a_2},\ldots,V_{a_{x-1}}\}$ and $\mathcal{T}=\{V_2,V_3, V_{b_1},V_{b_2},\ldots,V_{b_{x-1}}\}$ to denote the two family of clusters related to the two copies of $K_{x+1}$ in $R_d$. Applying Lemma~\ref{hu} on $G$ with $\eta=\frac{1}{2}\frac{m}{n},q=x+1,p=\ell$ and $V_1,V_3, V_{a_1},\ldots,V_{a_{x-1}}$ playing the role of $V_1,\ldots,V_q$, we can iteratively take $\frac{m}{2\ell}$ vertex-disjoint copies of $K_{r+1}$ (since $r+1\le pq$) which are denoted by $S_1,S_2,\ldots,S_{\frac{m}{2\ell}}$, such that each $S_i$ has exactly $y+1$ vertices in $V_3$, $\ell$ vertices in $V_1$ and $V_{a_i}$, $i\in[x-1]$. Let $V_3'\subseteq V_3$ be a subset obtained by taking exactly one vertex from each such $S_i$. Then $|V_3'|=\frac{m}{2\ell}$ and again by applying Lemma~\ref{hu} on $G$ with $\eta=\frac{1}{4\ell}\frac{m}{n}, q=x+1,p=\ell$ and $V_2,V_3', V_{b_1},\ldots,V_{b_{x-1}}$ playing the role of $V_1,\ldots,V_q$, we can greedily pick $\frac{m}{4\ell^2}$ vertex-disjoint copies of $K_{r+1}$, denoted by $T_1,T_2,\ldots,T_{\frac{m}{4\ell^2}}$, such that each $T_i$ has exactly $y+1$ vertices in $V_3'$, $\ell$ vertices in $V_2$ and $V_{b_i}$, $i\in[x-1]$.

\begin{figure}[htb]
\center{\includegraphics[width=10.03cm] {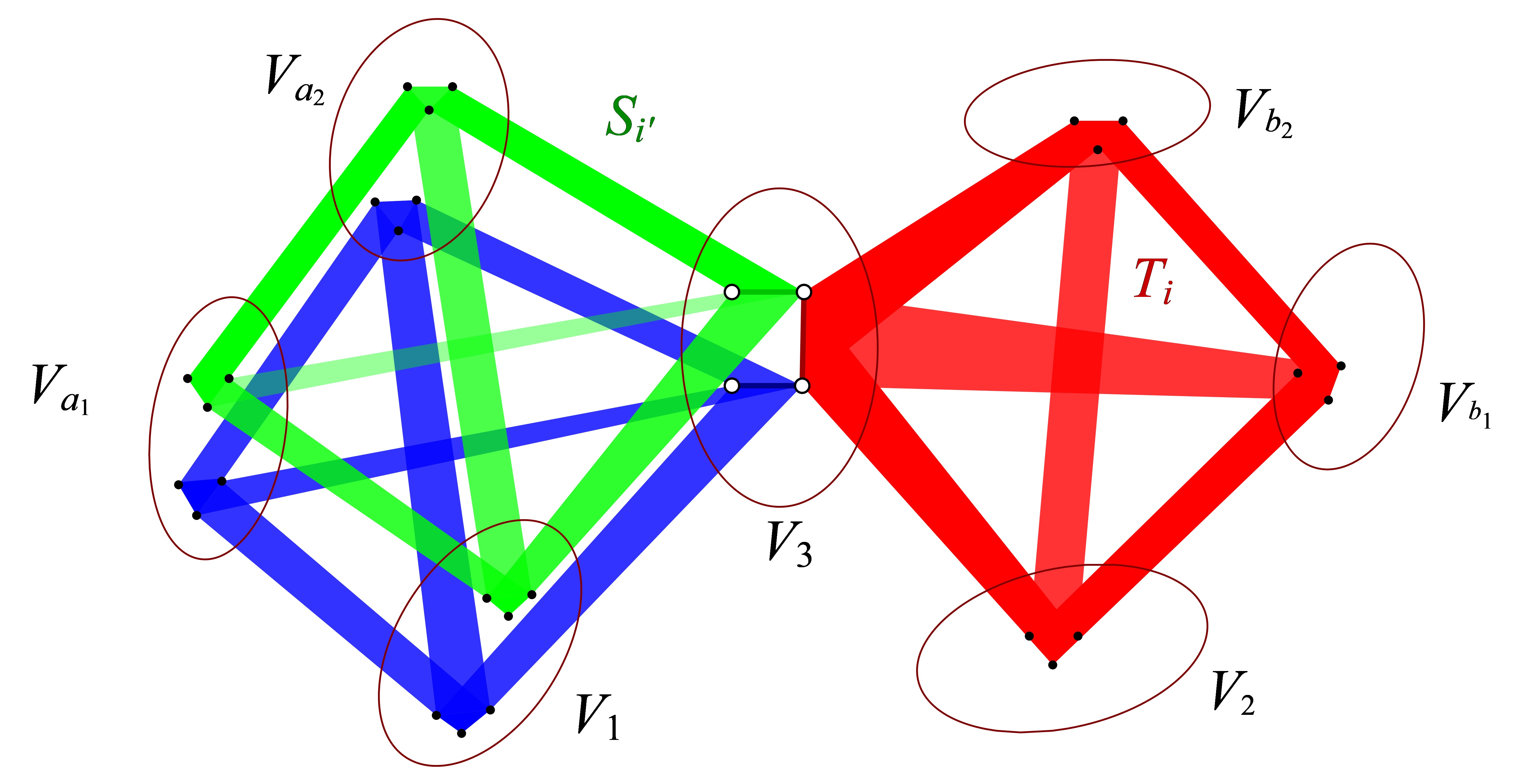}}
\caption{$S_i$ and $T_i$: here we take $\ell=3,y=1$ for instance.}
\label{figure1}
\end{figure}
Now it remains to show the following statement with $\beta n\le\frac{m}{4\ell^2}$. Recall that $m\le \eps n$.
\begin{claim}
There exist $\beta n$ vertex-disjoint $K_r$-reachable sets for $u,v$, each of size $4r-1$.
\end{claim}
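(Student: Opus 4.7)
The plan is to build one reachable set per pair-structure $(S_i, T_j)$ constructed above, using two short ``bridges'' supplied by the reachability of $V_1 \subseteq X$ to $u$ and of $V_2 \subseteq Y$ to $v$. Each $T_j$ meets $V_3'$ in exactly $y+1$ vertices, each being the designated vertex $w_i$ of a distinct $S_i$; in particular we may pair each $T_j$ with a distinct $S_{i}$ so that $S_i \cap T_j = \{w_i\}$. The $S_i$'s and the $T_j$'s are each pairwise vertex-disjoint; moreover, any cross-intersection $S_i \cap T_{j'}$ must lie inside $V_3 \cap T_{j'} \subseteq V_3'$ and $S_i \cap V_3' = \{w_i\}$, while $w_i \in T_j$ with $j \neq j'$ forces $w_i \notin T_{j'}$, so distinct pair-structures are fully vertex-disjoint. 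Thus we have $\frac{m}{4\ell^2}$ pairwise disjoint pair-structures, each of size $2r+1$.

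Fix such a pair $(S_i,T_j)$ and pick arbitrary $x_i \in V_1 \cap S_i$ and $y_j \in V_2 \cap T_j$ (there are $\ell$ candidates for each). Since $V_1\subseteq X$ and $V_2\subseteq Y$, Lemma~\ref{reachable lem} supplies $(r-1)$-sets $Z_i, Z_j \subseteq U$ with
\[
G[\{u\}\cup Z_i]=G[\{x_i\}\cup Z_i]=G[\{v\}\cup Z_j]=G[\{y_j\}\cup Z_j]=K_r.
\]
Set $R := Z_i \cup S_i \cup T_j \cup Z_j$, so $|R|=(r-1)+(2r+1)+(r-1)=4r-1$. Then $R$ is a $K_r$-reachable set for $\{u,v\}$: the decompositions
\[
\{u\}\cup R=(\{u\}\cup Z_i)\cup(S_i\setminus\{w_i\})\cup(T_j\setminus\{y_j\})\cup(\{y_j\}\cup Z_j),
\]
\[
\{v\}\cup R=(\{v\}\cup Z_j)\cup(T_j\setminus\{w_i\})\cup(S_i\setminus\{x_i\})\cup(\{x_i\}\cup Z_i)
\]
exhibit the required $K_r$-factors. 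Each of the eight pieces is a $K_r$: the ones containing $u$, $v$, $x_i$ or $y_j$ follow from the reachability above, while $S_i\setminus\{w_i\}$, $S_i\setminus\{x_i\}$, $T_j\setminus\{w_i\}$, $T_j\setminus\{y_j\}$ are $K_r$'s inside the $K_{r+1}$'s $S_i$ and $T_j$. Pairwise disjointness within each decomposition uses $x_i\in V_1$, $y_j\in V_2$, $V_1\cap V_2=\emptyset$, $V_3\cap(V_1\cup V_2)=\emptyset$, and $S_i\cap T_j=\{w_i\}$.

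To obtain $\beta n$ pairwise vertex-disjoint such reachable sets, we process the $\frac{m}{4\ell^2}$ pair-structures greedily. At any step, the total number of vertices used so far (including the current pair) is at most $\frac{m}{4\ell^2}(4r-1)\le \frac{\eps(4r-1)}{4\ell^2}n\ll\beta_1 n$, where we used $m\le\eps n$ and $\eps\ll\beta_1$. Since each used vertex lies in at most one of the $\beta_1 n$ vertex-disjoint candidate reachable sets guaranteed by Lemma~\ref{reachable lem}, we can always choose $Z_i$ and $Z_j$ avoiding everything used so far and then append $R$ to our collection. Choosing $\beta$ small enough that $\beta n \le \frac{m}{4\ell^2}$ (valid since $m \ge (1-\eps)n/N$) yields the desired family. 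The only delicate point is the eight-piece factor verification above; the counting and greedy argument are routine.
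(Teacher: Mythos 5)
Your proof is correct and follows essentially the same construction as the paper's: you form each reachable set as $R = Z_i \cup S_i \cup T_j \cup Z_j$ (the paper's $C_i, D_i, u_i, v_i, w_i, S_{i'}, T_i$ corresponding to your $Z_i, Z_j, x_i, y_j, w_i, S_i, T_j$) and verify the same four-clique decompositions of $\{u\}\cup R$ and $\{v\}\cup R$. You are in fact slightly more explicit than the paper about why the pair-structures $(S_i,T_j)$ are pairwise vertex-disjoint (the paper states the pairing without justifying injectivity or ruling out cross-intersections), which is a genuine gap-filling detail.
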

\begin{figure}[htb]
\center{\includegraphics[width=10.4cm] {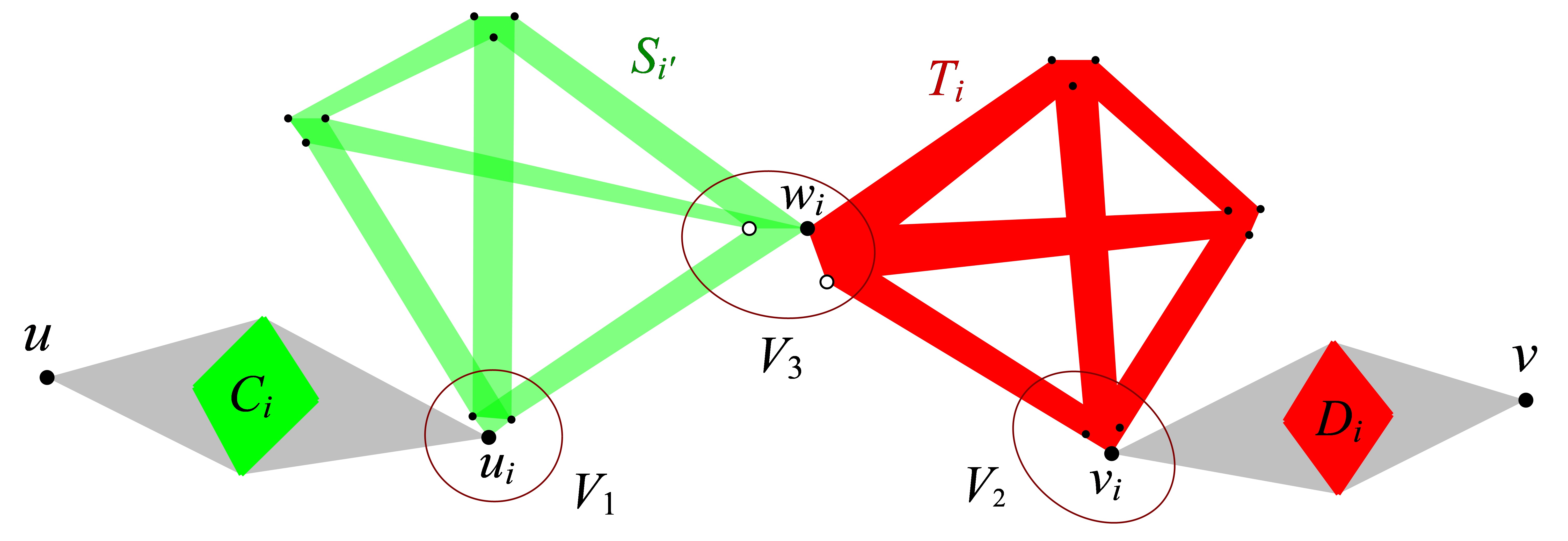}}
\caption{Constructions of $K_r$-reachable sets $E_i$.}
\label{figure2}
\end{figure}
\bproof[Proof of claim]\renewcommand\qedsymbol{$\blacksquare$}
Here the main idea is to extend all such $T_i$'s to pairwise vertex-disjoint $K_r$-reachable sets. Note that for each $T_i$, there exists $S_{i'}$ such that the two copies of $K_{r+1}$ intersect on exactly one vertex in $V_3'$, denoted by $w_i$. Let $u_i$ be an arbitray vertex chosen from $S_{i'}$ that lies in $V_1$, and $v_i$ be chosen from $T_i$ that lies in $V_2$.
Then by the assumption that $u$ is $(K_r, \beta_1, 1)$-reachable to $u_1$, there exist at least $\beta_1 n$ vertex-disjoint $K_r$-reachable sets for $u$ and $u_i$ (resp.~$v$ and $v_i$). Therefore by the fact that $\eps\ll \beta_1$, we can greedily choose two vertex-disjoint $K_r$-reachable sets, say $C_i$ and $D_i$, for $u,u_i$ and $v,v_i$, respectively, which are also disjoint from all cliques $S_{i}$ or $T_j$ for $i\in[\frac{m}{2\ell}],j\in[\frac{m}{4\ell^2}]$. It is easy to check that the set \[E_i:=V(S_{i'})\cup V(T_i)\cup C_i\cup D_i\] has size $4r-1$ and $G[E_i\cup\{u\}]$ (similarly for $E_i\cup\{v\}$) contains $4$ copies of $K_r$, which are induced on the sets $\{u\}\cup C_i, V(S_{i'})-\{w_i\}, V(T_i)-\{v_i\}$ and $\{v_i\}\cup D_i$, respectively. Thus by definition $E_i$ is a $K_r$-reachable set for $u$ and $v$. A desired number of mutually disjoint $K_r$-reachable set $E_i$ can be chosen by extending each $T_i$ as above.
\eproof

\end{proof}

\section{A construction}

In this section, we shall use a construction of $K_{\ell+1}$-free graphs $G$ with small $\alpha_{\ell}(G)$ to prove Proposition~\ref{lower}. An explicit construction was firstly obtained by Erd\H{o}s and Rogers \cite{erdos-rogers} in the setting that $\alpha_{\ell}(G)=o(n)$. Here we give a probabilistic construction as follows, whose proof is very similar to that of a result of Nenadov and Pehova (see Proposition 4.1 in \cite{Nenadov2018}).

\begin{lemma}\label{G2}
For any $\ell\in \mathbb{N}$ with $\ell\ge2$, a constant $\gamma\in(0,\frac{\ell-1}{\ell^2+2\ell})$ and sufficiently large integer $n$, there exists an $n$-vertex $K_{\ell+1}$-free graph $G_{\ell}$ such that $\alpha_{\ell}(G_{\ell})\le n^{1-\gamma}$.
\end{lemma}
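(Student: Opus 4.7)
The plan is to construct $G_\ell$ via the probabilistic method with alteration: sample a random graph, delete a few vertices to kill all $K_{\ell+1}$'s, and show that the resulting graph still has small $\ell$-independence number. Specifically, I would sample $G \sim G(N,p)$ with $N = 2n$ and $p = n^{-\alpha}$, where $\alpha \in (0,1)$ is to be chosen so as to balance two competing demands. The expected number of copies of $K_{\ell+1}$ is $\Theta(N^{\ell+1} p^{\binom{\ell+1}{2}})$, so for $\alpha \ge 2/(\ell+1)$ Markov's inequality shows that with probability at least $3/4$ there are at most $n/2$ such copies; on this event, we remove one vertex from each $K_{\ell+1}$ and trim to exactly $n$ vertices to obtain an $n$-vertex, $K_{\ell+1}$-free induced subgraph.

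To control $\alpha_\ell$, I would prove that with probability at least $1/2$ every $m$-subset of $V(G)$ contains a copy of $K_\ell$, where $m = \lceil n^{1-\gamma}\rceil$; since $\alpha_\ell$ is monotone under taking induced subgraphs, this gives $\alpha_\ell(G_\ell) \le m-1 \le n^{1-\gamma}$. For a fixed $m$-set $S$, let $X_S$ count the $K_\ell$'s in $G[S]$. Then $\mu := \mathbb E[X_S] = \Theta(m^\ell p^{\binom{\ell}{2}})$, while Janson's quantity decomposes according to the size $j$ of the overlap between two $K_\ell$'s,
\[
\Delta \;=\; \sum_{j=2}^{\ell-1} \Delta_j, \qquad \Delta_j \;\asymp\; m^{2\ell - j}\, p^{\,2\binom{\ell}{2} - \binom{j}{2}},\qquad \mu^2/\Delta_j \;\asymp\; m^j p^{\binom{j}{2}}.
\]
Janson's inequality then yields $\Pr[X_S = 0] \le \exp\!\bigl(-\tfrac12 \min(\mu, \mu^2/\Delta)\bigr)$, and a union bound over the $\binom{N}{m} \le e^{m\log n}$ choices of $S$ reduces the statement to $\min(\mu, \min_j \mu^2/\Delta_j) \gg m\log n$.

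The main obstacle is the arithmetic in the final step: after substituting $p = n^{-\alpha}$ and $m = n^{1-\gamma}$, the above requirements translate into a system of inequalities in $\alpha$ and $\gamma$ (together with $\alpha \ge 2/(\ell+1)$ from the deletion step). Identifying the binding term of $\Delta$ and comparing exponents shows that the feasible region for $\alpha$ is nonempty precisely when $\gamma < \frac{\ell-1}{\ell(\ell+2)} = \frac{\ell-1}{\ell^2+2\ell}$, which is the range stated in the lemma. For any such $\gamma$ we fix an admissible $\alpha$, obtain a realization of $G(N,p)$ on which both good events occur, and perform the deletion/trimming described above to produce the desired graph $G_\ell$.
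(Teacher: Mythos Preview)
Your deletion/alteration approach is correct and yields a valid proof of the lemma, but it is genuinely different from the paper's argument. The paper works directly in $G(n,p)$ with $p=n^{-(2-\gamma)/(\ell+1)}$ (so a \emph{denser} random graph than yours), applies the FKG inequality to lower-bound $\Pr[G\text{ is }K_{\ell+1}\text{-free}]$ by $\exp(-2n^{1+\gamma\ell/2})$, and then shows this exceeds the Janson/union-bound failure probability for $\alpha_\ell(G)>n^{1-\gamma}$; the case $\ell=2$ is handled separately by quoting $R(3,t)=\Theta(t^2/\log t)$. Your route avoids FKG entirely, handles all $\ell\ge 2$ uniformly, and is arguably more elementary; the paper's route avoids the bookkeeping of passing to an induced subgraph.

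Two small corrections to your write-up. First, at $\alpha=2/(\ell+1)$ the expected number of $K_{\ell+1}$'s in $G(2n,p)$ is $\Theta(n)$ with a constant that may exceed $1$, so Markov does not give at most $n/2$ copies; you need the strict inequality $\alpha>2/(\ell+1)$. Second, your final arithmetic is off: since $\mu^2/\Delta_j\asymp m^jp^{\binom{j}{2}}$ and $\mu\asymp m^\ell p^{\binom{\ell}{2}}$, the requirement $\min(\mu,\min_j\mu^2/\Delta_j)\gg m\log n$ reduces to $\alpha<2(1-\gamma)/j$ for all $2\le j\le \ell$, the binding one being $j=\ell$, i.e.\ $\alpha<2(1-\gamma)/\ell$. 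Combined with $\alpha>2/(\ell+1)$ this is feasible iff $\gamma<1/(\ell+1)$, which is \emph{strictly wider} than the range $\gamma<(\ell-1)/(\ell^2+2\ell)$ in the lemma. So your argument in fact proves more than stated, and the ``precisely when'' clause should be dropped; this does not affect correctness for the lemma as written.
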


Now we firstly give a short proof of Proposition~\ref{lower}, and then present the proof of Lemma~\ref{G2} at the end of this section.
\begin{proof}[Proof of Proposition~\ref{lower}]
Fix $r>\ell\ge2$ and constants $\eta,\gamma$ as in the statement. Let $n$ be sufficiently large and define $\mu=\tfrac{r}{r-\ell}(\tfrac{r-\ell}{r}-\eta)>0$. Then by Lemma~\ref{G2}, we choose $G_{\ell}$ to be a $(1-\eta)n$-vertex $K_{\ell+1}$-free graph with $\alpha_{\ell}(G_{\ell})\le |G_{\ell}|^{1-\gamma}$.

Let $G$ be an $n$-vertex graph with vertex partition $V(G)=X_1\cup X_2$ such that
\begin{enumerate}
  \item[(i)] $G[X_1]$ is a clique with $|X_1|=\eta n$;
  \item[(ii)] $G[X_1, X_2]$ is a complete bipartite graph;
  \item[(iii)] $G[X_2]$ induces a copy of $G_{\ell}$.
\end{enumerate} Now we claim that $G$ has the desired properties.
Indeed it is easy to see that $\de(G)\ge  \eta n$ and $\alpha_{\ell}(G)\le \alpha_{\ell}(G_{\ell})< n^{1-\gamma}$. Since $G[X_2]$ is $K_{\ell+1}$-free, every copy of $K_r$ must intersect $X_1$ on at least $r-\ell$ vertices. Thus every $K_r$-tiling in $G$ contains at most $\tfrac{|X_1|}{r-\ell}=\tfrac{\eta}{r-\ell} n$ vertex-disjoint copies of $K_r$, which together cover at most $\tfrac{r\eta}{r-\ell} n=(1-\mu)n$ vertices.
\end{proof}

\begin{proof}[Proof of Lemma~\ref{G2}]
We only consider $\ell\ge 3$ as the case $\ell=2$ follows from a celebrated result on Ramsey number that $R(3,n)=\Theta(n^2/\log n)$~\cite{kim95}.
We choose $\frac{1}{n}\ll\gamma, \frac{1}{\ell}$ and let $x=\tfrac{2-\gamma}{\ell+1}$. Considering the random graph $G=G(n,p)$ with $p=n^{-x}$, we shall verify that with positive probability, $G$ is $K_{\ell+1}$-free and $\alpha_{\ell}(G)\le n^{1-\gamma}$.
By applying the FKG inequality~\cite{FKG}, we have that

\begin{align}
  \mathbb{P}[\text{$G$ is $K_{\ell+1}$-free}]&\ge \prod_{S\in\binom{V(G)}{\ell+1}}\mathbb{P}[G[S]\neq K_{\ell+1}]  \nonumber \\
  & \ge\left(1-p^{\binom{\ell+1}{2}}\right)^{\binom{n}{\ell+1}}\ge \exp\left(-2p^{\binom{\ell+1}{2}}n^{\ell+1}\right)=\exp\left(-2n^{1+\frac{\gamma}{2}\ell}\right), \nonumber
\end{align}
where we bound $1-x\ge e^{-2x}$ for $x\in (0,\frac{1}{2})$. Now it remains to determine the probability of the event that $\alpha_{\ell}(G)\le n^{1-\gamma}$.

Let $I$ be the random variable counting all sets $A$ such that $|A|= n^{1-\gamma}$ and $G[A]$ is $K_{\ell}$-free. Then \[\mathbb{E}(I)= \sum_{|A|=n^{1-\gamma}}\mathbb{P}[\text{$G[A]$ is $K_{\ell}$-free}].\]
Here we shall use a powerful inequality of Janson \cite{janson}, where for each $\ell$-set $S\subseteq A$ we denote by $X_S$ the indicator variable for the event that $G[S]= K_{\ell}$. Let $X=\sum_{S\in\binom{A}{\ell}} X_S$. Then by Janson's inequality, we obtain that \[\mathbb{P}[\text{$G[A]$ is $K_{\ell}$-free}]=\mathbb{P}[X=0]\le\exp\left(-\mathbb{E}(X)+\frac{\Delta}{2}\right),\]
where $\mathbb{E}(X)=\binom{|A|}{\ell}p^{\binom{\ell}{2}}=\Theta\left(n^{(1-\gamma)\ell-\frac{2-\gamma}{\ell+1}\binom{\ell}{2}}\right)$ and $\Delta=\sum\limits_{S\neq S',~|S\cap S'|\ge2}\mathbb{P}[X_S=1,X_{S'}=1]$. Note that
\begin{align}
\Delta&=\sum\limits_{S\neq S',~|S\cap S'|\ge2}\mathbb{P}[X_S=1,X_{S'}=1] \nonumber \\
&\le \binom{|A|}{\ell}p^{\binom{\ell}{2}}\sum_{2\le s\le \ell-1}\binom{\ell}{s}\binom{|A|-\ell}{\ell-s}p^{\binom{\ell}{2}-\binom{s}{2}} \nonumber \\
&\le \mathbb{E}(X)\sum_{2\le s\le \ell-1}\binom{\ell}{s}n^{(\ell-s)(1-\gamma-x\frac{\ell+s-1}{2})} \nonumber \\
&=o(\mathbb{E}(X)),  \nonumber
\end{align}
where the last equality follows because $x=\frac{2-\gamma}{\ell+1}$ and thus $1-\gamma-x\frac{\ell+s-1}{2}\le-\frac{\gamma}{2}$ holds for any $s\ge 2$.
Therefore $\mathbb{E}(I)\le 2^n\exp\left(-\Theta\left(n^{(1-\gamma)\ell-\frac{2-\gamma}{\ell+1}\binom{\ell}{2}}\right)\right)$ and by Markov's inequality, with probability at least $1-2^n\exp\left(-\Theta\left(n^{(1-\gamma)\ell-\frac{2-\gamma}{\ell+1}\binom{\ell}{2}}\right)\right)$, we have $I=0$, that is, $\alpha_{\ell}(G)\le n^{1-\gamma}$.

By the inclusive-exclusive principle, the probability of the event that $G$ is $K_{\ell+1}$-free and $\alpha_{\ell}(G)\le n^{1-\gamma}$ is at least \[\exp\left(-2n^{1+\frac{\gamma}{2}\ell}\right)-2^n\exp\left(-\Theta\left(n^{(1-\gamma)\ell-\frac{2-\gamma}{\ell+1}\binom{\ell}{2}}\right)\right)\] and it is positive for sufficiently large $n$ as long as \[1+\frac{\gamma}{2}\ell<(1-\gamma)\ell-\frac{2-\gamma}{\ell+1}\binom{\ell}{2},\]
which follows easily as $\gamma<\frac{\ell-1}{\ell^2+2\ell}$.
\end{proof}

\section{Concluding remarks}\label{sec7}
In this paper we study the minimum degree condition for $K_r$-factors in graphs with sublinear $\ell$-independence number.
Our result is asymptotically sharp when $\alpha_{\ell}(G)\in (n^{1-\gamma},n^{1-\omega(n)\log^{-\lambda}n})$ for any constant $0<\gamma<\frac{\ell-1}{\ell^2+2\ell}$.


This leads to the following question: What is the general behavior of the minimum degree condition forcing a clique factor when the condition of $\ell$-independence number is imposed within the range $(1,n)$?  We formulate this as follows. Given integers $n>r>\ell\ge 2$ with $n\in r\mathbb{N}$, a constant $\alpha >0$ and a monotone increasing function $g(n)\in[n]$, we denote by $\textbf{RTT}_{\ell}(n,K_{r},g(\alpha n))$ the maximum integer $\delta$ such that there exists an $n$-vertex graph $G$ with $\delta(G) \ge \delta$ and $\alpha_{\ell}(G)\le g(\alpha n)$ which does not contain a $K_{r}$-factor. Here we try to understand when and how the value $\textbf{RTT}_{\ell}(n,K_{r},g(\alpha n))$ changes sharply when the magnitude of $g(n)$ varies. This can be seen as a degree version of the well-known phase transition problem for $\textbf{RT}_{2}(n,K_{r},g(n))$ in Ramsey--Tur\'{a}n theory (see \cite{BALOGH2015148,BaloghRT2012,kkl19}). It is worth noting that many open questions on the phase transition problem of $\textbf{RT}_{2}(n,K_{r},g(n))$ are essentially related to Ramsey theory.

Here we consider the basic case $\textbf{RTT}_2(n,K_{r},g(n))$. 
Recall that Knierim and Su \cite{knierim2019kr} resolved Problem~\ref{p1} for $r\ge 4$ by giving an asymptotically tight minimum degree bound $(1- \frac{2}{r})n + o(n)$. In our context of $g(n)=n$, this can be roughly reformulated as \[\textbf{RTT}_2(n,K_{r},o(n))=\frac{r-2}{r}n+o(n)~ \text{for} ~r\ge 4.\]
Also, for integers $r,\ell$ with $r> \ell\ge \frac{3}{4}r$, Theorem~\ref{main thm} can be stated as \[\textbf{RTT}_{\ell}(n,K_{r},o(n))=\frac{1}{2-\varrho_{\ell}(r-1)}n+o(n).\]
In this paper, our main theorem combined with Proposition~\ref{lower} and the cover threshold implies that for $r> \ell\ge 2,\gamma\in(0,\frac{\ell-1}{\ell^2+2\ell})$ and $n^{1-\gamma}\le f(n)\le n^{1-\omega(n)\log^{-\lambda}n}$,
\[\textbf{RTT}_{\ell}(n,K_{r},f(o(n)))=\max\left\{\frac{r-\ell}{r}n, \frac{1}{2-\varrho^*_{\ell}(r-1,f)}n\right\}+o(n).\] This provides an insight into the general behavior of $\textbf{RTT}_{\ell}(n,K_{r},f(n))$ but the asymptotic behavior of $\textbf{RTT}_{\ell}(n,K_{r},g(n))$ for a general $g(n)$ seems to be out of reach. 
It will be interesting to study the case $g(n)=n^c$ for any constant $c\in(0,1-\tfrac{\ell-1}{\ell^2+2\ell})$.

\bibliographystyle{plain}
\bibliography{ref(3)}

\begin{appendices}

\section{Proof of Lemma~\ref{hu}}
We use the method of dependent random choice to prove Lemma~\ref{hu}.
The method was developed by F\"uredi, Gowers,
Kostochka, R\"odl, Sudakov, and possibly many others.
The next lemma is taken from Alon, Krivelevich and Sudakov~\cite{AKS03}.
Interested readers may check the survey paper on this method by Fox and Sudakov~\cite{dependent11}.

\begin{lemma}[Dependent Random Choice]\emph{\cite{AKS03}}\label{deprc}
Let $a,d,m,n,r$ be positive integers. Let $G=(V,E)$ be a graph with
$n$ vertices and average degree $d=2e(G)/n$. If there is a
positive integer $t$ such that
\begin{equation}\label{drc}
\frac{d^t}{n^{t-1}} - \binom{n}{r}\left(\frac{m}{n}\right)^t \ge a,
\end{equation}
then $G$ contains a subset $U$ of at least $a$ vertices such that every $r$ vertices in $U$ have at least $m$ common neighbors.
\end{lemma}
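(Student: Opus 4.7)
The plan is the standard dependent random choice argument. The whole proof consists of picking $T$ to be a uniformly random ordered $t$-tuple of vertices drawn with repetition from $V(G)$, setting $A:=\bigcap_{v\in T}N(v)$, and then showing that in expectation $A$ is large and contains few \emph{bad} $r$-subsets $S$ (those with $|N(S)|<m$). Fixing a good realization of $T$ and deleting one vertex from each bad $r$-subset inside $A$ will yield the desired set $U$.

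First I would lower bound $\mathbb{E}[|A|]$. Since $u\in A$ exactly when $T\subseteq N(u)$, linearity of expectation gives
\[
\mathbb{E}[|A|]\;=\;\sum_{u\in V(G)}\left(\frac{d(u)}{n}\right)^{t},
\]
which by Jensen's inequality applied to the convex function $x\mapsto x^{t}$, using $\sum_{u}d(u)=2e(G)=dn$, is at least $n(d/n)^{t}=d^{t}/n^{t-1}$. Next I would upper bound the expected number $Y$ of bad $r$-subsets contained in $A$. For any bad $S$, $\mathbb{P}[S\subseteq A]=(|N(S)|/n)^{t}<(m/n)^{t}$, so a union bound over the at most $\binom{n}{r}$ bad subsets gives
\[
\mathbb{E}[Y]\;<\;\binom{n}{r}\left(\frac{m}{n}\right)^{t}.
\]

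Combining the two bounds, $\mathbb{E}[|A|-Y]\ge d^{t}/n^{t-1}-\binom{n}{r}(m/n)^{t}\ge a$, so there is a realization of $T$ with $|A|-Y\ge a$. Fix such a $T$ and delete one vertex from each bad $r$-subset contained in $A$ to obtain $U\subseteq A$. Then $|U|\ge|A|-Y\ge a$, and no bad $r$-subset survives inside $U$, meaning that every $r$-subset of $U$ has at least $m$ common neighbours, as required.

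There is no genuine obstacle here; the proof rests only on linearity of expectation and a single application of Jensen's inequality, and is precisely the original dependent random choice argument. The one subtlety worth flagging is the sampling convention: $T$ should be an ordered $t$-tuple drawn with repetition from $V(G)$ so that $\mathbb{P}[u\in A]=(d(u)/n)^{t}$ holds exactly, whereas bad $r$-subsets $S$ are counted unordered, giving the $\binom{n}{r}$ factor in the union bound. Getting this bookkeeping right is the only place where a beginner can slip.
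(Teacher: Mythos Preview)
Your proof is correct and is precisely the standard dependent random choice argument from \cite{AKS03}; the paper itself does not prove this lemma but merely cites it, so there is no alternative proof to compare against. Nothing to add beyond noting that your write-up matches the original argument essentially line for line.
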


Conlon, Fox, and Sudakov~\cite{CFS09} extended Lemma \ref{deprc} to hypergraphs. The {\em weight} $w(S)$ of a set $S$ of edges in a hypergraph is the number of vertices in the union of these edges.

\begin{lemma}[Hypergraph Dependent Random Choice]\emph{\cite{CFS09}}\label{hydeprc}
Suppose $s, \Delta$ are positive integers, $\eps, \delta > 0$, and
$G_r = (V_1, \ldots, V_r; E)$ is an $r$-uniform $r$-partite hypergraph
with $|V_1|=\ldots=|V_r| = N$ and at least $\eps N^r$ edges. Then
there exists an $(r-1)$-uniform $(r-1)$-partite hypergraph $G_{r-1}$
on the vertex sets $V_2,\ldots,V_r$ which has at least
$\frac{\eps^s}{2}N^{r-1}$ edges and such that for each nonnegative
integer $w\le (r-1)\Delta$, there are at most $4r\Delta
\eps^{-s}\beta^s w^{r\Delta}r^wN^w$ dangerous sets of edges of
$G_{r-1}$ with weight $w$, where a set $S$ of edges of $G_{r-1}$ is
dangerous if $|S|\le \Delta$ and the number of vertices $v\in V_1$
such that for every edge $e\in S, e+v\in G_r$ is less than $\beta N$.
\end{lemma}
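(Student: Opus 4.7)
The plan is to adapt the standard dependent random choice argument to the hypergraph setting by sampling on the ``first coordinate'' $V_1$. First I would pick a multiset $T = (x_1, \ldots, x_s)$ of $s$ elements chosen independently and uniformly at random from $V_1$ (with repetition allowed), and define the associated link hypergraph $G_{r-1}(T)$ on $V_2, \ldots, V_r$ whose edges are precisely those $(r-1)$-tuples $e \in V_2 \times \cdots \times V_r$ with $e + x_i \in E(G_r)$ for every $i \in [s]$. The goal is then to show that with positive probability this random $T$ produces a hypergraph $G_{r-1} := G_{r-1}(T)$ satisfying both conclusions of the lemma.

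For the lower bound on $|E(G_{r-1})|$, writing $d(e) := |\{v \in V_1 : e + v \in E(G_r)\}|$ for the codegree of $e$ in $G_r$, the probability that $e$ survives in $G_{r-1}(T)$ is $(d(e)/N)^s$, so
\[
\mathbb{E}\bigl[|E(G_{r-1}(T))|\bigr] = \sum_{e} \left( \frac{d(e)}{N} \right)^{\!s} \ge N^{r-1}\left(\frac{\sum_e d(e)}{N^r}\right)^{\!s} \ge \eps^s N^{r-1}
\]
by convexity (power mean inequality), using $\sum_e d(e) = |E(G_r)| \ge \eps N^r$. Since this variable is bounded above by $N^{r-1}$, the event $|E(G_{r-1}(T))| \ge \tfrac{\eps^s}{2} N^{r-1}$ occurs with probability at least $\eps^s/2$.

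For the dangerous sets, by definition any dangerous $S$ has fewer than $\beta N$ common extenders in $V_1$, so the probability that every edge of $S$ lies in $G_{r-1}(T)$ is at most $\beta^s$. To count candidate weight-$w$ sets with $1\le w\le (r-1)\Delta$ and $|S|\le\Delta$, I would first choose the $w$-vertex support in $V_2 \cup \cdots \cup V_r$ in at most $r^w N^w$ ways; the number of $(r-1)$-tuples supported on those $w$ vertices is at most $w^{r-1}$; hence the number of subsets of size $\le\Delta$ of those tuples is at most $\Delta\, w^{(r-1)\Delta}$. Multiplying by $\beta^s$ bounds the expected count, and then Markov's inequality applied for each $w$ bounds by $\eps^s/(4r w^{\Delta})$ the probability that the number of dangerous weight-$w$ sets in $G_{r-1}(T)$ exceeds $4r\Delta \eps^{-s}\beta^s w^{r\Delta}r^wN^w$; summing over $w\in[1,(r-1)\Delta]$ keeps the total failure probability below $\eps^s/2$.

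Combining the two estimates, some realisation of $T$ satisfies both conclusions simultaneously, and the lemma follows. The main technical obstacle is the bookkeeping in the dangerous-set count --- extracting a $w^{r\Delta}$ factor with enough slack so that the per-$w$ Markov thresholds union-bound below $\eps^s/2$ over all $w \le (r-1)\Delta$, rather than producing a diverging harmonic-type sum. Once the counting is arranged so as to absorb this loss, the rest is a clean application of the dependent random choice paradigm.
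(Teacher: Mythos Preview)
The paper does not give its own proof of this lemma: it is quoted verbatim from Conlon--Fox--Sudakov~\cite{CFS09} and used as a black box in the proof of Lemma~\ref{hu}. So there is nothing in the paper to compare against.

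That said, your proposal is correct and is exactly the argument in the original reference. Sampling a random multiset $T\in V_1^s$, passing to the common link hypergraph, lower-bounding the expected edge count by convexity, upper-bounding the survival probability of each dangerous set by $\beta^s$, and then combining a reverse-Markov bound for the edge count with a Markov/union bound over weights $w$ for the dangerous sets --- this is precisely the dependent random choice paradigm as carried out in~\cite{CFS09}. Your bookkeeping is also right: the count $r^wN^w\cdot \Delta\, w^{(r-1)\Delta}$ for candidate weight-$w$ sets, divided by the stated threshold $4r\Delta\,\eps^{-s}\beta^s w^{r\Delta}r^wN^w$, leaves exactly the $\eps^s/(4r w^{\Delta})$ slack you need so that the sum over $1\le w\le (r-1)\Delta$ stays below $\eps^s/2$. (The $w=0$ case is vacuous since the empty set has all of $V_1$ as extenders.) One cosmetic point: the statement lists a parameter $\delta>0$ that never reappears; this is evidently a typo for $\beta>0$, and your proof correctly treats $\beta$ as the free parameter.
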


\begin{proof}[Proof of Lemma~\ref{hu}]
Given a constant $d>0$ and integers $p,q\ge 2$, we choose \[\tfrac{1}{n}\ll\tfrac{1}{C},\eps\ll d, \tfrac{1}{p}, \tfrac{1}{q}~\text{and in addition}~\tfrac{1}{n}\ll\eta.\] Let $G$ be an $n$-vertex graph with $\alpha_p(G)< g(n)$, where \[g(n)=n2^{-C\log^{1-1/q}n}.\]
Let $V_1,V_2,\ldots,V_{q}$ be given such that $|V_i|\ge \eta n$, $i\in[q]$ and every pair $(V_i,V_j)$ is $\eps$-regular with density at least $d$. We define a $q$-uniform $q$-partite hypergraph $H^0$ whose vertex set is $\cup_{i\in[q]} V_i$ and edge set $E(H^0)$ is the family of $q$-sets that span $q$-cliques in $G$ and contain one vertex from each of $V_1,\ldots,V_q$. We may assume $|V_i| = \eta n=: N$, then by the counting lemma, $|E(H^0)|\ge \eps_0 N^q$, where $\eps_0 > \left( d/3\right)^{\binom{q}{2}}$.
Let \[\beta = \tfrac{g(n)}{N},\quad
s =\log^{\frac{1}{q}}n, \quad \eps_i = \eps_0^{s^i}2^{-\frac{s^i-1}{s-1}}, \quad r_i = q-i, \quad \Delta_i = p^{r_i} \quad \textnormal{and}\quad w_i = pr_i.\]
We start from $H^0$. For $1\le i \le q-2$ we  apply Lemma \ref{hydeprc} to $H^{i-1}$ with $\Delta = \Delta_i, \eps = \eps_{i-1}, r = r_{i-1}$ and $w = w_i$ to get $H^i$.
Note that $\Delta, \eps_0, r, w$ are all constants and $\frac{1}{C}\ll d, \frac{1}{p}, \frac{1}{q}$.
It is easy to check that for $1\le i\le q-2$, we have
\begin{eqnarray*}
4r\Delta\eps^{-s}\beta^s w^{r\Delta}r^{w}N^{w}
 &=&O\left(2^{2\log^{\frac{i-1}{q}}n}\eps_0^{-\log^{\frac{i}{q} }n}(1/\eta)^{\log^{\frac{1}{q}}n }2^{-C\log n} (\eta n)^{w}\right)\\
 &=&O(n^{-C/2})
 = o(1) < 1.
\end{eqnarray*}
Then by Lemma \ref{hydeprc} there exists an $r_i$-uniform
$r_i$-partite hypergraph $H^i$ on the vertex sets $V_{i+1},\ldots,V_q$
that contains at least $\eps_i N^{r_i}$ edges and contains no
dangerous sets of $\Delta_i$ edges on $w_i$ vertices. (Recall that a set $S$ of $\Delta_i$ edges on $w_i$ vertices is dangerous if the number of vertices $v \in V_i$ for which for every edge $e\in S, e+v \in H^{i-1}$ is less than $\beta N$). Now we have a
hypergraph sequence $\{H^\ell\}_{\ell=0}^{q-2}$.  We will prove by
induction on $i$ that there is a $p$-set $A^{q-\ell}\subset V_{q-\ell}$ for
$0\le \ell\le i$ such that $G\left[A^{q-\ell}\right]=K_p$ and
$H^{q-i-1}\left[\bigcup_{\ell=0}^{i} A^{q-\ell}\right]$ is complete $r_{q-i-1}$-partite. Note that if a vertex set $T$ is an edge
of $H^0$, then $G[T]$ is a $q$-clique. So
$G\left[\bigcup_{\ell=0}^{q-1} A^{q-\ell}\right] = K_{pq}$, which
will prove Lemma~\ref{hu}.

We first show that the induction hypothesis holds for $i=1$. Note that $r_{q-2}$ = 2, so $H^{q-2}$ is a bipartite graph on $2N$ vertices with at least $\eps_{q-2}N^2$ edges. We now apply Lemma~\ref{deprc} to $H^{q-2}$ with
$$a=2\beta N,  \qquad d=\eps_{q-2}N, \qquad t=s, 
  \qquad r=p \qquad \textnormal{and} \qquad m=\beta N.$$
We check condition \eqref{drc}:
\begin{eqnarray*}
\frac{(\eps_{q-2}N)^s}{(2N)^{s-1}}-{\binom{2N}{p}}\left( \frac{\beta N}{2N} \right)^s
&\ge& (\eps_0/2)^{\log^{1-1/q}n}N - n^p (1/2\eta)^{\log^{\frac{1}{q}}n}2^{-C\log n} \\
&=&  (\eps_0/2)^{\log^{1-1/q}n}N - o(1)
\ge 2\beta N,
\end{eqnarray*}
where the last equality and inequality follow as long as $C>\max\{p,\log \frac{2}{\eps_0}\}$. Therefore we have a subset $U$ of $V_{q-1}\cup V_{q}$ with $|U| = 2\beta N$ such that
every $p$ vertices in $U$ have at least $\beta N$ common neighbors in $H^{q-2}$. Either $V_{q-1}$ or $V_{q}$ contains at least half of the vertices of $U$, so w.l.o.g. we may assume
that $U' = U\cap V_{q-1}$ contains at least $\beta N = m$ vertices.
Because $\alpha_p(G) < m$, the vertex set $U'$ contains a $p$-vertex set $A^{q-1}$
such that $G\left[A^{q-1}\right] = K_p$. The vertices of $A^{q-1}$ have at least $m$ common neighbors in $V_q$, so their common neighborhood also contains a $p$-vertex subset $A^q$ of $V_q$ such that $G[A^q] = K_p$.
 Now $H^{q-2}\left[A^{q-1}\cup A^q\right]$ is complete bipartite.
 We are done with the base case $i=1$.

 For the induction step, assume that the induction hypothesis
holds for $i-1$, then we can find a complete $r_{q-i}$-partite subhypergraph  $\widetilde{H}^{q-i}$  of $H^{q-i}$ spanned by $\bigcup_{\ell=0}^{i-1} A^{q-\ell}$, where $G[A^{q-\ell}] = K_p$ for  every $\ell$. The hypergraph
$H^{q-i}$ has no dangerous set of $\Delta_{q-i}$ edges on $w_{q-i}$ vertices, and $\widetilde{H}^{q-i}$ contains $pi = w_{q-i}$ vertices and $p^i = \Delta_{q-i}$ edges,
so $\widetilde{H}^{q-i}$  is not dangerous.
 Then we can find a set $B$ of $\beta N$ vertices in $V_{q-i}$ such that
 for every edge $e\in \widetilde{H}^{q-i}$ and every vertex $v\in B$, $e+v \in H^{q-i-1}$, which means that $ H^{q-i-1}\left[B\cup\bigcup_{\ell=0}^{i-1} A^{q-\ell}\right]$ is complete $r_{q-i-1}$-partite. Then,
 because $\alpha_p(G) < \beta N$, we can find a $p$-vertex subset $A^{q-i}$ of $B$ such that $G[A^{q-i}] = K_p$.
\end{proof}

\end{appendices}

\end{document}